\documentclass[a4paper]{amsart}

\usepackage[greek,english]{babel}
\usepackage{microtype}

\usepackage[T1]{fontenc}
\usepackage{libertinus}
\usepackage{amsmath}
\usepackage{amssymb}
\usepackage{mathtools}

\usepackage[
  libertinus,
  noto,
  vvarbb,
  amsthm,
  nott, 
]{newtx}

\usepackage[scale=0.95,varqu,varl]{inconsolata}
  
\usepackage{xfrac}
\usepackage{xcolor}

\usepackage{tikz}
\usetikzlibrary{arrows}
\usetikzlibrary{arrows.meta}

\usepackage[pdftex,
            pdfauthor={Michael Eisermann},
            pdftitle={Formalizing Pick's Theorem, efficiently}, 
            pdfstartview=FitH,
            colorlinks=true,
            anchorcolor=black, 
            linkcolor=black, 
            urlcolor=black,
            citecolor=black, 
            pagecolor=black, 
            filecolor=black, 
            menucolor=black, 
            ]{hyperref}

\usepackage[a4paper]{geometry}
\title{Formalizing Pick's Theorem, efficiently} 

\author{Michael Eisermann}

\address{Institut für Geometrie und Topologie, Universität Stuttgart, Germany}
\email{Michael.Eisermann@mathematik.uni-stuttgart.de}

\author{Elli zu Neblberg}

\date{Burg Liebenzell, first draft September 2024, this version compiled \today}

\numberwithin{equation}{section}

\theoremstyle{plain}
  \newtheorem{theorem}{Theorem}[section]
  \newtheorem{lemma}[theorem]{Lemma}

\theoremstyle{definition}
  \newtheorem{definition}[theorem]{Definition}

  \newtheorem{remark}[theorem]{Remark}
  \newtheorem{example}[theorem]{Example}
  
\newcommand{\N}{\mathbb{N}}
\newcommand{\Z}{\mathbb{Z}}
\newcommand{\Q}{\mathbb{Q}}
\newcommand{\R}{\mathbb{R}}
\newcommand{\K}{\mathbb{K}}

\newcommand{\ii}[2]{\mathopen[ #1, #2 \mathclose]}

\newcommand{\ei}[2]{\mathopen] #1, #2 \mathclose]}
\newcommand{\ee}[2]{\mathopen] #1, #2 \mathclose[}

\newcommand{\fade}{\color{black!50}}

\newcommand{\minus}{\smallsetminus}
\newcommand{\abs}[1]{\lvert #1 \rvert}

\DeclareMathOperator{\sign}{sign}
\DeclareMathOperator{\vol}{vol}
\DeclareMathOperator{\supp}{supp}

\newcommand{\area}{\mathrm{area}}
\newcommand{\Area}{\mathrm{Area}}
\newcommand{\ang}{\mathrm{ang}}
\newcommand{\Ang}{\mathrm{Ang}}
\newcommand{\dang}{\mathrm{dang}}
\newcommand{\Dang}{\mathrm{Dang}}
\newcommand{\welp}{\mathrm{welp}}
\newcommand{\Welp}{\mathrm{Welp}}

\def\middlevert{\mskip-0mu\mid\mskip-0mu}
\begingroup
\catcode`\|=\active
\gdef\set#1{\begingroup \mathcode`\|32768 \let|\middlevert \{\mskip2mu #1 \mskip2mu\} \endgroup}
\endgroup

\begin{document} 

\begin{abstract}
  Pick's astonishing theorem explains how to obtain 
  the area of any integer polygon by counting lattice points. 
  It is a notoriously difficult challenge 
  to translate the geometric statement and intuitive reasoning
  into a formal statement and rigorous proof. 
  We transform the beautiful geometry into equally elegant algebra,
  and then implement the algebraic proof in Lean.  
\end{abstract}

\maketitle

\begin{center}
  \small\it
  “Algebra is but written geometry, geometry is but figured algebra.”
  \rm (Sophie Germain, 1776--1831)
\end{center}

\section{Introduction: Pick's wonderful theorem} \label{sec:Introduction}

A \emph{polygon} $P = (p_0,p_1,\ldots,p_n)$ is 
a finite sequence of points $p_i = (x_i,y_i) \in \R^2$.
It defines the corresponding path $\gamma \colon [0,1] \to \R^2$
by piecewise linear interpolation of the given vertices
$\gamma(i/n) = p_i$ for $i=0,1,\dots,n$.
We call $P$ \emph{closed} if $p_0 = p_n$, and
\emph{simply closed} if $\gamma(s) = \gamma(t)$
only holds for $s = t$ or $\{s,t\} = \{0,1\}$.
In this case Jordan's theorem applies:
The polygonal curve $C = \gamma([0,1]) \subset \R^2$
separates the plane in two connected open sets $A$ and $B$,
so $\R^2 = A \sqcup B \sqcup C$, where the \emph{exterior region} $A$
is unbounded and the \emph{interior region} $B$ is bounded.
Their common boundary is the curve $C$, so their closures
are $\bar{A} = A \cup C$ and $\bar{B} = B \cup C$.

\begin{figure}[ht]
  \begin{tikzpicture}[x=5mm, y=5mm, font=\footnotesize,
      rounded corners=0.2pt, inner sep=2pt, align= left]
    \setlength{\baselineskip}{5mm}
    \draw[color=black!50, step=1] (-1,-1) grid (21,7);
    \begin{scope}[shift={(0,0)}]
      \draw[line width=1pt, blue, fill=yellow!50, fill opacity=0.3]
      (0,2) -- (2,0) -- (5,0) -- (5,2) -- (4,3) -- (5,5) --
      (2,6) -- (0,3) -- (2,5) -- (4,1) -- (3,1) -- (2,3) -- cycle;
      \draw[inner sep=1pt, font=\tiny\fade\mathstrut] 
      (2,3) node[above left]{$p_{11}$}
      (0,2) node[below left]{$p_0$}
      (2,0) node[below left]{$p_1$}
      (5,0) node[below left]{$p_2$};
      \foreach \x/\y in { 1/2, 2/1, 2/2, 3/4, 3/5, 4/2, 4/4, 4/5 }{
        \draw[draw=black, thin, fill=blue!70!white] (\x,\y) circle[radius=1.5pt];
      }
      \foreach \x/\y in { 0/2, 1/1, 2/0, 3/0, 4/0, 5/0, 5/1, 5/2,
        4/3, 5/5, 2/6, 0/3, 1/4, 2/5, 3/3, 4/1, 3/1, 2/3 }{
        \draw[draw=black, thin, fill=blue] (\x,\y) circle[radius=1.5pt];
      }
      \foreach \x/\y in { 0/2, 2/0, 5/0, 5/2, 4/3, 5/5, 2/6, 0/3, 2/5, 4/1, 3/1, 2/3 }{
        \draw[draw=black, thin, fill=blue!70!black] (\x,\y) circle[radius=1.5pt];
      }
      \draw (0,6) node[below right] {(a)};
    \end{scope}
    \begin{scope}[shift={(6,0)}]
      \draw (0,6) node[below right]{(b) simple \\ but not closed};
      \draw[line width=1pt, blue] (1,0) -- (4,1) -- (0,3) -- (3,4);
      \draw[line width=1pt, blue, dashed ] (3,4) -- (1,0) node[pos=0.4, above left]{?};
      \foreach \x/\y in { 1/0, 4/1, 0/3, 3/4 }{
        \draw[draw=black, thin, fill=blue!70!black] (\x,\y) circle[radius=1.5pt];
      }      
    \end{scope}
    \begin{scope}[shift={(11,0)}]
      \draw (0,6) node[below right]{(c) closed \\ but not simple};
      \draw[draw=none, fill=teal!30, fill opacity=0.3]
      (1,0) -- (4,1) -- (2,2) -- cycle;
      \draw[draw=none, fill=orange!40, fill opacity=0.3]
      (0,3) -- (3,4) -- (2,2) -- cycle;
      \draw[line width=1pt, blue] (1,0) -- (4,1) -- (0,3) -- (3,4) -- cycle;
      \foreach \x/\y in { 1/0, 4/1, 0/3, 3/4 }{
        \draw[draw=black, thin, fill=blue!70!black] (\x,\y) circle[radius=1.5pt];
      }      
    \end{scope}
    \begin{scope}[shift={(16,0)}]
      \draw (0,6) node[below right, ]{(d) simply closed \\ but not integer};
      \draw[line width=1pt, blue, fill=purple!30, fill opacity=0.3]
      (0,2.5) -- (2,0) -- (3,2) -- (2,4) -- cycle;
      \foreach \x/\y in { 2/0, 3/2, 2/4 }{
        \draw[draw=black, thin, fill=blue!70!black] (\x,\y) circle[radius=1.5pt];
      }
      \draw[draw=black, thin, fill=purple] (0.0,2.5) circle[radius=1.5pt];
    \end{scope}
    \draw (23,3) node{\includegraphics[height=30mm]{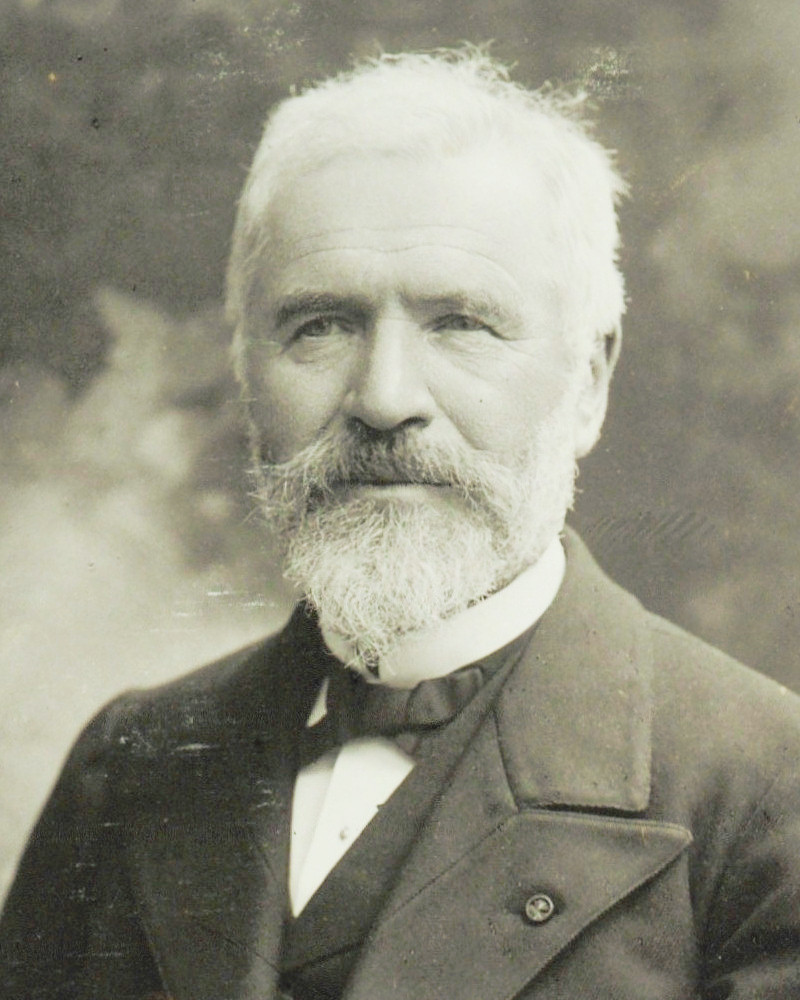}};
    \draw (23,0) node[below, font=\footnotesize]{Camille Jordan};
  \end{tikzpicture}
\end{figure}

Let $\vol_2(B)$ denote the enclosed area.
On the other hand, we can count the number of lattice points,
$I := \abs{ \Z^2 \cap B }$ in the interior and
$J := \abs{ \Z^2 \cap C }$ on the boundary $C = \partial B = \partial A$.

Here the magic of Pick's theorem happens:

\begin{theorem}[Georg Pick 1899 \cite{Pick:1899}] \label{thm:PickCombinatorial}
  Let $P = (p_0,p_1,\ldots,p_n=p_0)$
  be a simply closed polygon with integer vertices $p_i \in \Z^2$.
  Then $\vol_2(B) = I + J/2 - 1$.
\end{theorem}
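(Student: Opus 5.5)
The plan is to trade the geometry for algebra, as the abstract announces: express both sides of Pick's formula as sums over the edges of $P$ of quantities attached to each lattice point, and then use additivity. For each point $q \in \R^2$ let $w(q) \in \frac12\Z$ denote the \emph{winding number} of the polygonal path $\gamma$ around $q$ whenever $q \notin C$. For $q \in C$, set $w(q)$ to the \emph{normalized angle} contribution, i.e.\ $\theta/2\pi$ with $\theta$ the interior angle at $q$. Then $w(q)=1$ on $B$ and $0$ on $A$ (after fixing the orientation to be positive). At boundary points, $w(q)=1/2$ on edge interiors and $\theta/2\pi$ at vertices. Jordan's theorem, which the introduction takes for granted, is used only here: it identifies $B$ with $\set{q \notin C | w(q) = \pm 1}$ and lets us fix the orientation.

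The first step is to define a \emph{weighted lattice count}
\[ L(P) := \sum_{q \in \Z^2} w(q). \]
This sum is finite, since $w$ vanishes far away. For a simply closed integer polygon, $L(P) = I + \tfrac12 (J - n') + \sum_{\text{vertices}} \theta_v/2\pi$, where $n'$ is the number of genuine corners. Since the interior angles of a simple $n'$-gon sum to $(n'-2)\pi$, this gives $L(P) = I + J/2 - 1$. This angle sum is itself a winding/turning-number statement, namely Hopf's Umlaufsatz for polygons. I would prove it separately by induction via an ear decomposition, or deduce it from the additivity below.

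The second step is additivity. Both $\vol_2$, extended to the signed area $\frac12\sum_i \det(p_i,p_{i+1})$, and $L$ can be written as sums over edges $[p_i,p_{i+1}]$ of a function of a triangle $(c,p_i,p_{i+1})$ with a fixed base point $c$. For the area this is the shoelace formula. For $L$, we need each point's contribution $w(q)$ to decompose as a sum over edges of signed angles $\frac{1}{2\pi}\angle(p_i-q,\,p_{i+1}-q)$. This holds with the convention that a point on an edge gets half the angle, so the decomposition must be set up carefully. It reduces the theorem to the identity area $=$ weighted count for a single lattice triangle, counted with signs and with $w$ defined through angles. Both sides are then additive under cutting a triangle along lattice segments, and invariant under $\mathrm{GL}_2(\Z)$ and integer translations.

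The third step proves the triangle case by the standard reduction. For a lattice rectangle with axis-parallel sides, the identity is a direct count. Halving along a diagonal and using the symmetry $(x,y)\mapsto(a-x,b-y)$ gives it for right triangles with legs on the axes. An arbitrary lattice triangle is a bounding rectangle minus at most three right triangles, possibly minus a rectangle, and additivity finishes the argument. Degenerate triangles contribute zero to both sides.

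The main obstacle is the second step. We need a clean, formally checkable definition of the angle-weighted count $w(q)$ for $q$ on edges and at vertices. It must be exactly additive over the edge fan, including when $q$ coincides with the base point $c$ or lies on a fan segment. I would try defining $w(q)$ as the limit of the average of the winding number over a small disk around $q$, which is manifestly additive. Computing its local values then shows it gives $1/2$ on edges and $\theta/2\pi$ at corners.
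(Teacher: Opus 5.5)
Your proposal is correct. Its first step is the paper's own reduction. Jordan gives winding number $1$ on $B$ and $0$ on $A$. Hopf gives interior angles summing to $(n'-2)\pi$. Together they identify $I+J/2-1$ with the angle-weighted sum $\sum_{q\in\Z^2}\Ang(P-q)$ of Lemma~\ref{lem:PickEuclidean}.

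On the angle sum, use the ear decomposition, not ``additivity''. The additivity of your second step holds for arbitrary closed chains, so it cannot by itself supply this input, which depends on simplicity.

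You genuinely differ from the paper in how you prove area $=$ weighted count.

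\emph{The paper's route.} It splits $\Area(P)$ into the signed trapezoids $\area(u,v)=\frac12(u_1-v_1)(u_2+v_2)$ between each edge and the $x$-axis. It then proves $\area(u,v)=\welp(u,v)$ edge by edge with the axis-crossing measure $\dang$ (Lemma~\ref{lem:PickDiscrete}). One edge only affects the columns between $v_1$ and $u_1$, and the sum is truncated to a symmetric box. The single involution $q\mapsto u+v-q$ cancels everything except a block whose direct count is $\frac12(u_1-v_1)(u_2+v_2)$. This is uniform, needs no triangulation or case analysis, and stays in rational arithmetic, which is why it formalizes so easily.

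\emph{Your route.} You use a fan of closed lattice triangles from a lattice base point $c$.
\begin{itemize}
\item Closed triangles have finitely supported weights, so you need no box. For a single open edge the lattice sum has infinitely many nonzero terms, which is why the paper truncates.
\item You can reuse the classical rectangle, right-triangle and bounding-box argument.
\item The price is a case analysis of triangle positions, a check of additivity under cutting, and euclidean angles or disk-average limits. All of this is considerably heavier to formalize.
\end{itemize}

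The step you call the main obstacle is settled in the paper by the convention $\ang(u,v)=0$ whenever $\abs{u}\abs{v}+u\cdot v=0$, i.e.\ for opposite or zero vectors. This keeps the angle antisymmetric and additive under subdividing a segment (Definition~\ref{def:AngleMeasure}). So the fan decomposition is exactly additive at $c$, on fan segments and on $C$, with no limit needed. It also agrees with your disk averages; compare the disc example in the ``water proof''.

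One minor correction: the weighted count is not pointwise $\mathrm{GL}_2(\Z)$-invariant, because shears change euclidean vertex angles. For a triangle only the total is invariant, since its angles sum to $\pi$. You never actually need this: your reduction uses only integer translations, reflections and the half-turn $(x,y)\mapsto(a-x,b-y)$.
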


\begin{figure}[ht]
  \begin{tikzpicture}[x=5mm, y=5mm, rounded corners=0.1pt]
    \begin{scope}[shift={(0.0,-2)}]
      \draw[color=black!50, step=1] (-1,-1) grid (6,5);
      \draw[line width=1pt, blue, fill=yellow!50, fill opacity=0.3] (0,0) rectangle (5,4);
      \draw[inner sep=1pt, font=\small\fade\mathstrut] 
      (0,0) node[below right]{$0$}
      (5,0) node[below left]{$a$}
      (0,0) node[above left]{$0$}
      (0,4) node[below left]{$b$};
      \clip (0,0) rectangle (5,4);
      \foreach \x in {0,...,5}{
        \foreach \y in {0,...,4}{
          \draw[draw=none, fill=blue, fill opacity=0.5] (\x,\y) circle[radius=1mm];
        }
      }
    \end{scope}
    \begin{scope}[shift={(9.5,0)}]
      \draw[color=black!50, step=1] (-3,-3) grid (3,3);
      \draw[line width=1pt, teal, fill=yellow!50, fill opacity=0.3]
      (+3,0) -- (0,+3) -- (-3,0) -- (0,-3) -- cycle;
      \draw[draw=none, inner sep=1pt, font=\small\fade\mathstrut]
      (-3,0) -- (0,+3) node[midway, sloped, above] {$3\sqrt{2}$}; 
      \clip (+3,0) -- (0,+3) -- (-3,0) -- (0,-3) -- cycle;
      \foreach \x in {-3,...,3}{
        \foreach \y in {-3,...,3}{
          \draw[draw=none, fill=teal, fill opacity=0.5] (\x,\y) circle[radius=1mm];
        }
      }
    \end{scope}
    \begin{scope}[shift={(16,0)}, x=2.5mm, y=2.5mm]
      \draw[color=black!50, step=1] (-6,-6) grid (6,6); 
      \draw[line width=1pt, orange, fill=yellow!50, fill opacity=0.3]
      (+1,+0) -- (+6,+1) -- (+3,+1) -- (+5,+2) -- (+2,+1) -- (+5,+3) -- (+3,+2) -- (+6,+5) --
      (+1,+1) -- (+5,+6) -- (+2,+3) -- (+3,+5) -- (+1,+2) -- (+2,+5) -- (+1,+3) -- (+1,+6) --
      (-0,+1) -- (-1,+6) -- (-1,+3) -- (-2,+5) -- (-1,+2) -- (-3,+5) -- (-2,+3) -- (-5,+6) --
      (-1,+1) -- (-6,+5) -- (-3,+2) -- (-5,+3) -- (-2,+1) -- (-5,+2) -- (-3,+1) -- (-6,+1) --
      (-1,-0) -- (-6,-1) -- (-3,-1) -- (-5,-2) -- (-2,-1) -- (-5,-3) -- (-3,-2) -- (-6,-5) --
      (-1,-1) -- (-5,-6) -- (-2,-3) -- (-3,-5) -- (-1,-2) -- (-2,-5) -- (-1,-3) -- (-1,-6) --
      (+0,-1) -- (+1,-6) -- (+1,-3) -- (+2,-5) -- (+1,-2) -- (+3,-5) -- (+2,-3) -- (+5,-6) --
      (+1,-1) -- (+6,-5) -- (+3,-2) -- (+5,-3) -- (+2,-1) -- (+5,-2) -- (+3,-1) -- (+6,-1) --
      cycle;
      \draw[draw=none, fill=orange, fill opacity=0.5] (0,0) circle[radius=1mm];
    \end{scope}
    \draw (22, 0) node{\includegraphics[height=30mm]{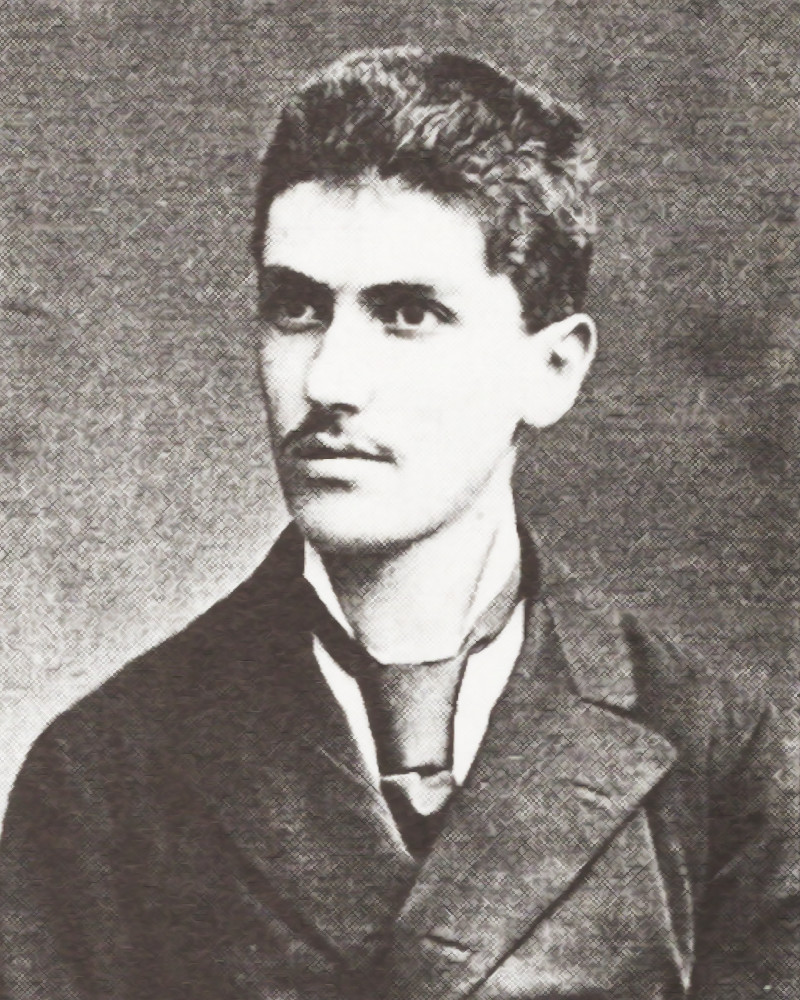}};
    \draw (22,-3) node[below, font=\footnotesize]{Georg Pick};
  \end{tikzpicture}
  \caption{(a) rectangle, (b) oblique square, (c) Farey sunburst $F_6$} 
  \label{fig:PickExamples}
\end{figure}

\begin{example}
  (a) As a simple illustration, consider 
  the rectangle $R = [0,a] \times [0,b]$ with $a,b \in \N_{\ge1}$.
  We find $I = (a-1) (b-1)$ and $J = 2 a + 2 b$,
  which nicely adds up to $\vol_2(R) = a b$.

  (b) For the oblique square $Q$ of \autoref{fig:PickExamples}(b) 
  we find $I = 13$ and $J = 12$, whence $\vol_2(Q) = 18$.
  By Pick's theorem we can measure the area
  or count lattice points, whichever is simpler.
  
  (c) We calculate the area of the Farey sunburst $F_6$ shown in \autoref{fig:PickExamples}(c).
  Its $64$ vertices are given by $(x,y) \in \{-6,\dots,6\}^2$ with $\gcd(x,y) = 1$.
  Closer inspection reveals $32$ further boundary points. Its only inner point is $(0,0)$.
  We conclude that $\vol_2(F_6) = 1 + 96/2 - 1 = 48$.
\end{example}

\begin{remark}
  It is essential that the polygon $P$ be given by integer vertices.
  The rectangle $R = [0,a] \times [1/4,3/4]$, for example,
  has area $a/2$ but dos not contain any lattice point.
\end{remark}

\begin{remark}
  Pick's theorem is special to the plane $\R^2$.
  No such simple formula can hold in higher dimensions:
  The Reeve tetrahedron $T_r \subset \R^3$ is the convex
  hull of the four integer vertices $(0,0,0)$, $(1,0,0)$, $(0,1,0)$
  and $(1,1,r)$ with $r \in \N$.
  It has arbitrarily large volume $\vol_3(T_r) = r/6$,
  yet contains no further lattice points.
\end{remark}

\begin{remark}
  According to the formula $\vol_2(B) = I + J/2 - 1$,
  the area of any simply closed integer polygon
  is always integer or half integer,
  in brief $\vol_2(B) \in \frac{1}{2} \Z$.

  Here is a nice application: 
  Can you construct an equilateral triangle $\Delta \subset \R^2$
  with three integer vertices $(0,0)$, $(a,b)$ and $(c,d)$?
  No!  Its side length $\ell$ would satisfy $\ell^2 = a^2 + b^2 \in \N$ by Pythagoras,
  so its area $\vol_2(\Delta) = \sqrt{3}/4 \cdot \ell^2$
  cannot be in $\frac{1}{2} \Z$, since $\sqrt{3}$ is irrational.
\end{remark}

\section{How to formalize Pick's theorem?}

We work over an ordered field $(\K,+,\cdot,\le)$.
The traditional choice is the field $\R$
of real numbers, which we consider first.
It turns out, however, that the field $\Q$ of rational numbers suffices,
and moreover is more convenient for computer implementations.
By abstracting both these primary examples
to ordered fields we cover all cases simultaneously.

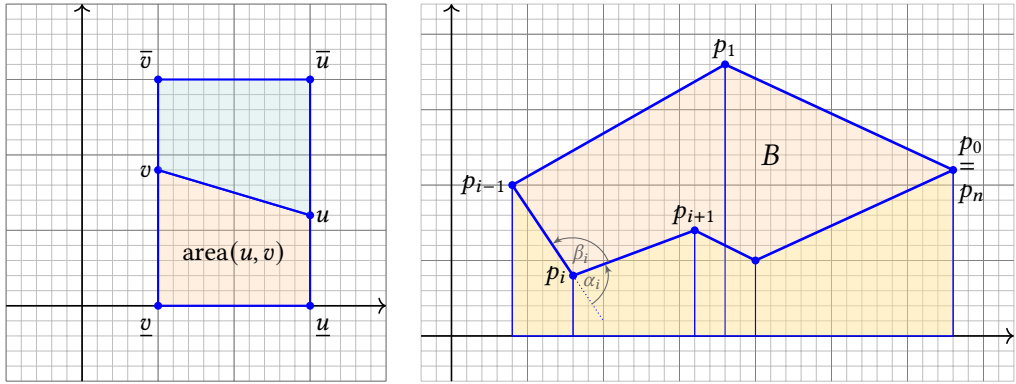
\begin{figure}[ht]
  \begin{tikzpicture}[x=10mm, y=10mm, font=\mathstrut, inner sep=2pt]
    \draw[style= help lines, very thin, color=lightgray, step=0.2] (-1,-1) grid (4,4);
    \draw[style= help lines, very thin, color=gray, step=1] (-1,-1) grid (4,4);
    \draw[semithick, ->] (-1,0) -- (4,0); 
    \draw[semithick, ->] (0,-1) -- (0,4); 
    \filldraw[draw=blue, thick, fill=orange!40, fill opacity=0.3]
    (3.0,1.2) -- (1.0,1.8) -- (1.0,0.0) -- (3.0,0.0) -- cycle;
    \draw[draw=none, fill=blue, radius=1.5pt]
    (3.0,1.2) circle node[right] {$u$}
    (1.0,1.8) circle node[left] {$v$}
    (1.0,0.0) circle node[below left] {$\underline{v}$}
    (3.0,0.0) circle node[below right] {$\underline{u}$};
    \filldraw[draw=blue, thick, fill=teal!30, fill opacity=0.3]
    (3.0,1.2) -- (3.0,3.0) -- (1.0,3.0) -- (1.0,1.8) -- cycle;
    \draw[draw=none, fill=blue, radius=1.5pt]
    (1.0,3.0) circle node[above left] {$\overline{v}$}
    (3.0,3.0) circle node[above right] {$\overline{u}$};
    \draw (2.0,0.7) node{$\area(u,v)$};
  \end{tikzpicture}
  \quad
  \begin{tikzpicture}[x=10mm, y=10mm, rounded corners=0.2pt]
    \draw[style= help lines, very thin, color=lightgray, step=0.2] (-0.4,-0.6) grid (7.4,4.4);
    \draw[style= help lines, very thin, color=gray, step=1.0] (-0.4,-0.6) grid (7.4,4.4);
    \draw[semithick, ->] (-0.4,0.0) -- (7.4,0.0);
    \draw[semithick, ->] (0.0,-0.6) -- (0.0,4.4);

    \coordinate (P0) at (6.6,2.2);
    \coordinate (P1) at (3.6,3.6);
    \coordinate (P2) at (0.8,2.0);
    \coordinate (P3) at (1.6,0.8);
    \coordinate (P4) at (3.2,1.4);
    \coordinate (P5) at (4.0,1.0);
    
    \coordinate (Q0) at (6.6,0.0);
    \coordinate (Q1) at (3.6,0.0);
    \coordinate (Q2) at (0.8,0.0);
    \coordinate (Q3) at (1.6,0.0);
    \coordinate (Q4) at (3.2,0.0);
    \coordinate (Q5) at (4.0,0.0);
    
    \draw[blue, fill=orange!40, fill opacity=0.3]
    (Q0) -- (P0) -- (P1) -- (Q1) -- cycle;
    \draw[blue, fill=orange!40, fill opacity=0.3]
    (Q1) -- (P1) -- (P2) -- (Q2) -- cycle;
    \draw[blue, fill=yellow!50, fill opacity=0.3]
    (Q2) -- (P2) -- (P3) -- (Q3) -- cycle;
    \draw[blue, fill=yellow!50, fill opacity=0.3]
    (Q3) -- (P3) -- (P4) -- (Q4) -- cycle;
    \draw[blue, fill=yellow!50, fill opacity=0.3]
    (Q4) -- (P4) -- (P5) -- (Q5) -- cycle;
    \draw[blue, fill=yellow!50, fill opacity=0.3]
    (Q5) -- (P5) -- (P0) -- (Q0) -- cycle;

    \draw[blue, line width=1pt]
    (P0) -- (P1) -- (P2) -- (P3) -- (P4) -- (P5) -- cycle;
    \draw[draw=none, fill=blue, radius=1.5pt, inner sep=2pt]
    (P0) circle node[right, align=left] {$p_0$ \\[-3pt] $=$ \\[-3pt] $p_n$}
    (P1) circle[] node[above] {$p_1$}
    (P2) circle[] node[left] {$p_{i-1}$}
    (P3) circle[] node[left] {$p_i$}
    (P4) circle[] node[above] {$p_{i+1}$}
    (P5) circle[]; 

    \draw[blue, densely dotted] (P3) -- +(+0.4,-0.6);
    \draw[draw=black!60, -latex'] (P3) +(-57:0.45) arc (-57:+21:0.45);
    \draw (P3) +(-20:0.27) node[black!60, scale=0.8]{$\alpha_i$};
    \draw[draw=black!60, -latex'] (P3) +(+21:0.50) arc (+21:124:0.50);
    \draw (P3) +(70:0.30) node[black!60, scale=0.8]{$\beta_i$};

    \draw (4.2,2.4) node[scale=1.2]{$B$};

  \end{tikzpicture}
  \caption{Oriented area of a trapezoid and of a polygon}
  \label{fig:AreaAngle}
\end{figure}

\begin{definition}
  Given two points $u,v \in \K^2$ we have the (oriented) area of the trapezoid,
  \begin{align}
    \area(u,v) := \frac{1}{2} (u_1 - v_1) (u_2 + v_2) .
  \end{align}
  
  For every polygon $P = (p_0,p_1,\ldots,p_n)$ in $\K^2$
  we thus define its area to be 
  \begin{align}
    \label{eq:Area}
    \Area(P) := \sum_{i=1}^{n} \area(p_{i-1},p_i) .
  \end{align}
\end{definition}

\begin{remark}
  For a simply closed polygon in $\R^2$, this yields $\abs{\Area(P)} = \vol_2(B) > 0$.
  
  If $\Area(P) > 0$, we call our polygon $P$ \emph{positively oriented}.
  Otherwise, if $\Area(P) < 0$, we reverse $P = (p_0,p_1,\ldots,p_n)$
  to $P' = (p_n,\ldots,p_1,p_0)$.  This does not change the curve $C$,
  but ensures that $P'$ is positively oriented.
  This will be our standard convention.
\end{remark}

This elegant definition of $\Area(P)$ clarifies
the left hand side of Pick's equation.
For the right hand side $I + J/2 - 1$
we have to define --- and then count! ---
the enclosed lattice points.
To this end we use the winding number
of our polygon $P$ around some point $q \in \K^2$.

\subsection{The euclidean angle measure}

Over $\K = \R$, we can use the \emph{euclidean angle}
$\theta = \sphericalangle(u,v)$ between vectors $u,v \in \R^2$:
For each $u \ne 0$ and $v \notin u \, \R_{\le 0}$,
there exists a unique real number $\theta \in \ee{-\pi}{\pi}$
such that rotation by $\theta$ aligns $u$ with $v$:
\begin{align}
  \frac{v}{\abs{v}} = \begin{bmatrix}
    \cos\theta & -\sin\theta \\
    \sin\theta & \cos\theta
  \end{bmatrix} \frac{u}{\abs{u}} .
  \qquad
  \begin{tikzpicture}[x=20mm, y=20mm, baseline={(0.2,0.0)}]
    \draw[style= help lines, very thin, color=lightgray, step=0.1] (-1.001,-0.601) grid (1.001,0.801);
    \draw[style= help lines, very thin, color=gray, step=1.0] (-1.001,-0.601) grid (1.001,0.801);
    \draw[draw=none, fill=teal!30, fill opacity=0.3] (0,0) -- (30:0.5) arc (30:100:0.5) -- cycle;
    \draw[draw=black, -latex'] (30:0.5) arc (30:100:0.5);
    \draw (65:0.3) node[scale=0.8]{$\theta{>}0$};
    \draw[draw=none, fill=purple!30, fill opacity=0.3] (0,0) -- (30:0.5) arc (30:-40:0.5) -- cycle;
    \draw[draw=black, -latex'] (30:0.5) arc (30:-40:0.5);
    \draw (-5:0.3) node[scale=0.8]{$\theta'{<}0$};
    \draw[draw=blue, thick] (0,0) -- (30:0.9) (0,0) -- (100:0.7) (0,0) -- (-40:0.8);
    \draw[draw=black, fill=red] (0,0) circle[radius=2pt] node[shift={(-4pt,-5pt)}] {$0$};
    \filldraw[blue, radius=1pt]
    ( 30:0.9) circle node[below] {$u$} 
    (100:0.7) circle node[right, yshift=-5pt] {$v$}
    (-40:0.8) circle node[above, xshift=+5pt] {$v'$};
  \end{tikzpicture}
\end{align}

In the controversial case $v \in u \, \R_{<0}$,
both solutions $\theta = \pm\pi$ are equally possible.
Usually this case is ignored, forbidden, or arbitrated.
We democratically set $\sphericalangle(u,v) = 0$.
This choice may seem strange at first, but turns out to be advantageous.
It allows us to cover all cases uniformly,
and miraculously leads to the correct point count on the boundary.

\begin{definition}
  We define our \emph{euclidean angle measure}
  $\ang \colon \R^2 \times \R^2 \to \ee{-\sfrac{1}{2}}{+\sfrac{1}{2}}$ by
  \begin{align}
    \ang(u,v) := \begin{cases}
      \theta / 2\pi & \text{if $\abs{u} \cdot \abs{v} + u \cdot v > 0$,}
      \\ 0 & \text{if $\abs{u} \cdot \abs{v} + u \cdot v = 0$.}
    \end{cases}
  \end{align}
  
  By summing the angles of all edges of $P$,
  we obtain the (euclidean) \emph{winding number} 
  \begin{align}
    \Ang(P) := \sum_{i=1}^n \ang(p_{i-1},p_i) .
  \end{align}
\end{definition}

\begin{remark}
  If our polygon $P$ is closed and $0 \notin C$, then $\Ang(P) \in \Z$
  measures how often $P$ winds around the origin.
  Likewise $\Ang(P-q) \in \Z$ measures the winding number
  around the point $q \in \R^2 \minus C$,
  where $P-q = (p_0-q,p_1-q,\ldots,p_n-q)$.

  If $P$ is simply closed and positively oriented,
  then its Jordan decomposition $\R^2 = A \sqcup B \sqcup C$
  is characterized by the winding number:
  We have $\Ang(P-q) = 0$ for each exterior point $q \in A$
  and $\Ang(P-q) = 1$ for each interior point $q \in B$.
  We thus obtain
  \begin{align}
    I = \sum_{q \in \Z^2 \minus C} \Ang(P-q) .
  \end{align}
  
  Our careful definition pays off: 
  $\Ang(P-q)$ measures the interior angle at each boundary point $q \in C$.
  For $q \in \ee{p_{i-1}}{p_i}$ in the interior of any edge,
  we find $\Ang(P-q) = \sfrac{1}{2}$.  In each vertex, we find
  $\Ang(P-p_i) = \beta_i := \ang(p_{i+1}-p_i, p_{i-1}-p_i)$.
  By adding the turning angle $\alpha_i = \ang(p_i-p_{i-1}, p_{i+1}-p_i)$
  each vertex point is counted by $\sfrac{1}{2}$ as well.
  (This is illustrated in \autoref{fig:AreaAngle}.)
  The sum of all turning angles is $1$ by Hopf's umlaufsatz.

  If $P$ is a simply closed lattice polygon,
  with integer vertices $v_0,\ldots,v_n \in \Z^2$, then
  \begin{align}
    J/2 - 1 = \sum_{q \in \Z^2 \cap C} \Ang(P-q) .
  \end{align}
\end{remark}

The right hand side $I + J/2 - 1$ of Pick's equation
is defined geometrically, and usually formulated intuitively.
Any attempt to define it precisely relies on Jordan's theorem.
The winding number, as defined above, allows us to algebraically count
this as 
\begin{align}
  I + J/2 - 1  = \sum_{q \in \Z^2} \Ang(P-q) .
\end{align}

This reduces Pick's theorem to the following algebraic statement:

\begin{lemma}[Pick's lemma, using the euclidean angle measure] \label{lem:PickEuclidean}
  Let $p_1,\ldots,p_n=p_0 \in \Z^2$ be any sequence of integer points.
  Then the closed polygon $P = (p_0,p_1,\ldots,p_n)$ satisfies Pick's equation 
  \begin{align}
    \Area(P) = \sum_{q \in \Z^2} \Ang(P-q)
  \end{align}
\end{lemma}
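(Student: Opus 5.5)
The plan is to exploit the fact that both sides of Pick's equation are additive over closed polygons. This reduces the lemma to lattice triangles, and those in turn to axis-parallel rectangles and right triangles.

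\emph{Step 1: antisymmetry and finiteness.} From the definitions, $\area(u,v) = -\area(v,u)$. Likewise $\ang(u,v) = -\ang(v,u)$: reversing $u,v$ negates $\theta$, and the degenerate case gives $0$ either way. I also need the additivity $\ang(u,v)+\ang(v,w) = \ang(u,w)$ whenever $u,v,w$ lie in a common open half-plane through $0$, which follows from composing rotations. Consequently, if $q$ lies outside the convex hull of the $p_i$, all vectors $p_i - q$ lie in an open half-plane, the angles telescope, and $\Ang(P-q) = 0$. Hence $\sum_{q\in\Z^2}\Ang(P-q)$ is a finite sum over the lattice points of a bounding box.

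\emph{Step 2: reduction to triangles.} For the closed polygon $P$, consider the fan triangles $T_i = (p_0,p_{i-1},p_i,p_0)$ for $i=2,\dots,n-1$. By antisymmetry the inserted diagonals cancel pairwise, so for every $q$
\[
  \Area(P)=\sum_i \Area(T_i), \qquad \Ang(P-q)=\sum_i \Ang(T_i-q).
\]
Summing over $q$ (a finite sum) shows that it suffices to prove the lemma for closed lattice triangles $T=(a,b,c,a)$. The same cancellation shows that the functional $F(Q):=\sum_q \Ang(Q-q) - \Area(Q)$ is additive under gluing closed polygons along common edges. If $a,b,c$ are collinear, then $\Area(T)=0$. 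For $q$ off the line the three vectors lie in a half-plane and telescope to $0$. For $q$ on the line every angle is $0$ or of the form $\pm\sfrac12$, and the latter is set to $0$ by our convention. So $F(T)=0$ in the degenerate case.

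\emph{Step 3: nondegenerate triangles.} Assume $T$ is positively oriented. I compute $\Ang(T-q)$ directly from Step 1:
\begin{itemize}
  \item $\Ang(T-q)=1$ for $q$ strictly inside, since the three angles are positive and sum to a full turn;
  \item $\Ang(T-q)=0$ outside;
  \item $\Ang(T-q)=\sfrac12$ in the relative interior of an edge, where the edge itself contributes the convention value $0$;
  \item $\Ang(T-q)$ equals the interior angle at each vertex $q$.
\end{itemize}
Since the euclidean angle sum of a triangle is $\pi$, this gives $\sum_q\Ang(T-q) = I + (J-3)/2 + \sfrac12 = I + J/2 - 1$. It remains to prove the classical count for lattice triangles. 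By additivity of $F$, every lattice triangle is an axis-parallel lattice rectangle minus at most three right triangles with axis-parallel legs, and possibly a rectangle, all as closed polygons with cancelling edges. So it suffices to show $F=0$ for rectangles, which is the direct count of the Example, and for right triangles. A right triangle is half of a rectangle: the point reflection through the rectangle's center swaps the two halves and preserves $\Z^2$, so both halves have equal counts. Additivity then gives $F(\text{half}) = \tfrac12 F(\text{rectangle}) = 0$.

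\emph{Main obstacle.} I expect the hardest part to be Step 3's pointwise evaluation of $\Ang(T-q)$, in particular the boundary cases. There one must check carefully that the convention $\ang(u,v)=0$ for $v\in u\,\R_{<0}$ produces exactly $\sfrac12$ on edges and the interior angle at vertices, and that the additivity of $\ang$ is only ever applied within a half-plane. I would first establish a clean lemma: $\ang(u,v)+\ang(v,w)+\ang(w,u)\in\{0,1\}$ for vectors spanning a nondegenerate positively oriented triangle around $0$, and then treat the boundary cases by explicit inspection.
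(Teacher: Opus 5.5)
Your argument is correct, but it takes a genuinely different route from the paper. The paper never computes with $\ang$ directly. It switches to the discrete measure $\dang$, exchanges the double sum, and proves the termwise identity $\area(u,v)=\welp(u,v)$ for each single edge. That proof uses three facts: $q\mapsto\dang(u-q,v-q)$ vanishes outside the vertical strip between $v_1$ and $u_1$; the half-turn $q\mapsto u+v-q$ cancels a symmetric block of summands; and the leftover values $\sfrac{1}{2}$ and $\sfrac{1}{4}$ add up to $\frac12(u_1-v_1)(u_2+v_2)$. The link back to $\Ang$ goes through $\Ang(P-q)=\Dang(P-q)$ off the curve, together with the Jordan--Hopf boundary count.

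You instead stay with $\ang$ throughout, using only antisymmetry, additivity inside open half-planes, and the antipodal convention. You use an exact pointwise fan decomposition, the collinear case, and the classical bounding-box decomposition of a lattice triangle into a rectangle minus right triangles. The same half-turn reappears there, to show that a right triangle carries exactly half of its rectangle.

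What each route buys:
\begin{itemize}
\item Yours proves the euclidean statement for every closed lattice polygon, simple or not, with no appeal to Jordan or Hopf. The paper accounts for the passage from $\Dang$ to $\Ang$ at boundary lattice points only through Jordan and Hopf, that is, for simply closed polygons.
\item The paper's needs no triangulation and no case analysis of triangle shapes, just one uniform computation per edge, and this is what keeps the Lean formalization short. It relies on the strip-shaped support of $\dang$, so it does not carry over to $\ang$ edge by edge: $\ang(u-q,v-q)\neq 0$ for most $q$ far from the edge.
\end{itemize}

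Two small remarks. First, you never actually use your pointwise evaluation of $\Ang(T-q)$ for triangles or the angle sum $\pi$. Once $F$ is additive, $F=0$ on rectangles together with the half-turn argument already gives $F(T)=0$. The only pointwise evaluation you need is for a rectangle, or even a unit square, where only the four corners contribute, each with $\sfrac{1}{4}$. This removes most of what you call the main obstacle.

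Second, some pieces share only part of an edge, for instance a leg of a corner triangle lying on a side of the bounding rectangle. Gluing these needs subdivision invariance, $\ang(u-q,s-q)+\ang(s-q,v-q)=\ang(u-q,v-q)$ for $s\in[u,v]$, and the analogous identity for $\area$. The $\ang$ identity follows from your half-plane additivity when $q$ is off the line through $u,v$, and from the convention (all angles vanish) when $q$ lies on it. The $\area$ identity holds because $\area(u,s)+\area(s,v)+\area(v,u)$ is the signed area of the triangle $(u,s,v)$, which is zero for collinear points.
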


Notice that this formulation does not require $P$ to be simple.
If $P$ is simply closed and positively oriented, 
then the right hand side equals $I + J/2 - 1$,
by Jordan and Hopf.

\subsection{The discrete angle measure}

The euclidean angle measure requires
real numbers and transcendental functions.
For computer implementation this is a heavy burden:
\texttt{Real} is good for proofs but bad for calculation,
whereas \texttt{Float} is good for approximations but bad for proofs.

For our purposes we prefer to work with the following \emph{discrete angle measure}.
This allows us to work over any ordered field $\K$,
for example $\Q \subseteq \K \subseteq \R$.

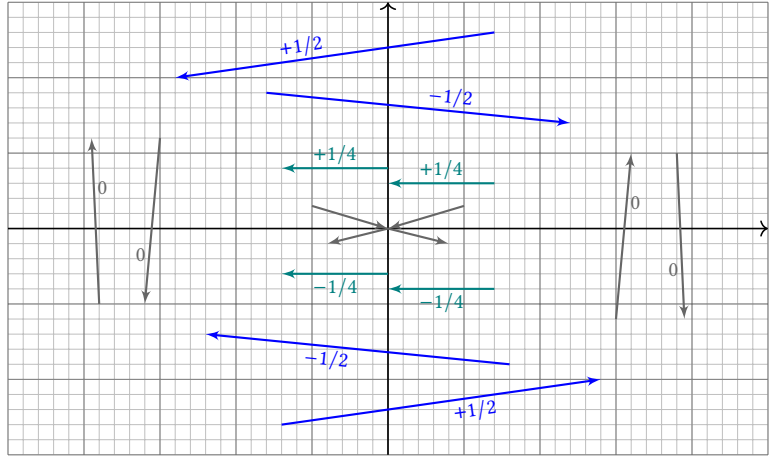
\begin{figure}[ht]
  \begin{tikzpicture}[x=10mm, y=10mm, inner sep=1pt, font=\footnotesize]
    \draw[style= help lines, very thin, color=lightgray, step=0.2] (-5,-3) grid (+5,+3);
    \draw[style= help lines, very thin, color=gray, step=1] (-5,-3) grid (+5,+3);
    \draw[semithick, ->] (-5,0) -- (+5,0); 
    \draw[semithick, ->] (0,-3) -- (0,+3); 
    \draw[thick, blue, -latex'] (+1.4,+2.6) -- (-2.8,+2.0)
    node[pos=0.6, sloped, above]{$+1/2$};
    \draw[thick, blue, -latex'] (-1.4,-2.6) -- (+2.8,-2.0)
    node[pos=0.6, sloped, below]{$+1/2$};
    \draw[thick, blue, -latex'] (+1.6,-1.8) -- (-2.4,-1.4)
    node[pos=0.6, sloped, below]{$-1/2$};
    \draw[thick, blue, -latex'] (-1.6,+1.8) -- (+2.4,+1.4)
    node[pos=0.6, sloped, above]{$-1/2$};
    \draw[thick, teal, -latex'] (+1.4,+0.6) -- (+0.0,+0.6)
    node[pos=0.5, sloped, above]{$+1/4$};
    \draw[thick, teal, -latex'] (-0.0,+0.8) -- (-1.4,+0.8)
    node[pos=0.5, sloped, above]{$+1/4$};
    \draw[thick, teal, -latex'] (+1.4,-0.8) -- (+0.0,-0.8)
    node[pos=0.5, sloped, below]{$-1/4$};
    \draw[thick, teal, -latex'] (-0.0,-0.6) -- (-1.4,-0.6)
    node[pos=0.5, sloped, below]{$-1/4$};
    \draw[thick, black!60, -latex'] (-3.0,+1.2) -- (-3.2,-1.0) node[pos=0.7, left]{$0$};
    \draw[thick, black!60, -latex'] (-3.8,-1.0) -- (-3.9,+1.2) node[pos=0.7, right]{$0$};
    \draw[thick, black!60, -latex'] (+3.0,-1.2) -- (+3.2,+1.0) node[pos=0.7, right]{$0$};
    \draw[thick, black!60, -latex'] (+3.8,+1.0) -- (+3.9,-1.2) node[pos=0.7, left]{$0$};
    \draw[thick, black!60, -latex'] (+1.0,+0.3) -- (+0.0,+0.0);
    \draw[thick, black!60, -latex'] (+0.0,+0.0) -- (+0.8,-0.2);
    \draw[thick, black!60, -latex'] (-1.0,+0.3) -- (-0.0,+0.0);
    \draw[thick, black!60, -latex'] (-0.0,+0.0) -- (-0.8,-0.2);
  \end{tikzpicture}
  \caption{The discrete angle measure}
  \label{fig:Dang}
\end{figure}

\begin{definition}
  For the edge between any two points $u,v \in \K^2$
  we define the \emph{discrete angle measure}
  as the number of axis crossings,
  as illustrated in Figure \ref{fig:Dang}:
  \begin{align}
    \dang(u,v) :=
    \frac{1}{4} \abs{ \sign u_1 - \sign v_1 }
    \cdot \sign \det \begin{bmatrix} u_1 & v_1 \\ u_2 & v_2 \end{bmatrix} .
  \end{align}

  By summing the angles of all edges of $P$,
  we obtain the (discrete) \emph{winding number} 
  \begin{align}
    \Dang(P) := \sum_{i=1}^n \dang(p_{i-1},p_i) .
  \end{align}
\end{definition}

If our polygon $P$ is closed and $0 \notin C$,
then $\Dang(P) = \Ang(P)$ measures the winding number around $0$.
The summands change, but the overall sum is the same;
the difference is only noticeable for boundary points $q \in C$ or open polygons.

The theorems of Jordan and Hopf, as stated above, continue
to hold with $\Dang(P)$ in place of $\Ang(P)$, see the Appendix.
We can thus define the elusive right hand side $I + J/2 - 1$ of Pick's equation
by the \emph{weighted sum of enclosed lattice points}
\begin{align}
  \label{eq:Welp}
  \Welp(P) := \sum_{q \in \Z^2} \Dang(P-q) .
\end{align}

\begin{lemma}[Pick's lemma, using the discrete angle measure] \label{lem:PickDiscrete}
  Let $p_1,\ldots,p_n=p_0 \in \Z^2$ be any sequence of integer points.
  Then the closed polygon $P = (p_0,p_1,\ldots,p_n)$ satisfies Pick's equation 
  \begin{align}
    \Area(P) = \Welp(P) .
  \end{align}
\end{lemma}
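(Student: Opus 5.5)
The plan is to localize both sides of Lemma~\ref{lem:PickDiscrete} to individual edges. $\Area(P)$ is already a sum over edges. For $\Welp(P)$ the edge-wise sums $\sum_{q} \dang(p_{i-1}-q, p_i-q)$ diverge, so a naive exchange of summations is not allowed, and a telescoping correction is needed first. Write $u = p_{i-1}-q$, $v = p_i-q$ and $s_u = \sign(u_1)$, $s_v = \sign(v_1)$. Since $\sign$ is monotone, $\frac14\abs{s_u - s_v}\cdot \sign(u_1 - v_1) = \frac14(s_u - s_v)$. From the definition one checks that $\dang(u,v)$ equals $+\frac14(s_u-s_v)$ for all $q$ far below the segment and $-\frac14(s_u-s_v)$ for all $q$ far above it. Fix the weight $w(t) = +1$ for $t<0$ and $w(t) = -1$ for $t \ge 0$. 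Define the edge correction
\begin{align*}
  c_q(a,b) := \tfrac14\bigl(\sign(a_1-q_1) - \sign(b_1-q_1)\bigr)\, w(q_2) ,
\end{align*}
and the edge contribution
\begin{align*}
  \welp(a,b) := \sum_{q\in\Z^2} \bigl[\dang(a-q,b-q) - c_q(a,b)\bigr] .
\end{align*}

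For fixed $q$, the correction $c_q(p_{i-1},p_i)$ telescopes to $0$ around the closed polygon. Hence $\Dang(P-q) = \sum_i [\dang(p_{i-1}-q,p_i-q) - c_q(p_{i-1},p_i)]$ for every $q$. Only finitely many $q$ have $\Dang(P-q)\neq 0$, namely those inside a bounding box; this follows because each bracket vanishes once $q$ is outside the vertical strip of the edge or outside the band between height $0$ and the segment. So the double sum is absolutely convergent, in fact finite, and can be exchanged, giving $\Welp(P) = \sum_i \welp(p_{i-1},p_i)$. It therefore suffices to prove the edge identity $\welp(u,v) = \area(u,v)$ for all $u,v\in\Z^2$.

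For the edge identity, I would first dispose of the trivial cases. If $u_1 = v_1$, both sides vanish. The identity is antisymmetric under swapping $u,v$, so I may assume $u_1 > v_1$. Then only columns $q_1 \in [v_1,u_1]$ contribute; the two end columns carry weight $\frac14$ and the interior columns weight $\frac12$. In each column the bracket is $\pm\frac12$ (resp.\ $\pm\frac14$) exactly for the lattice points between height $0$ and the line through $u,v$, with sign given by the sign of the height. Points lying on the segment get weight $0$ through $\sign\det = 0$, which effectively counts them with half weight relative to the correction. Hence $\welp(u,v)$ is a weighted lattice-point count of the trapezoid $\underline{v}\,\underline{u}\,u\,v$, signed in the same way as $\area(u,v)$.

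The main obstacle is this count: showing that the weighted count equals $\frac12(u_1-v_1)(u_2+v_2)$, i.e.\ Pick for a lattice trapezoid with one possibly oblique side. I would split it into the rectangle part of height $\min(u_2,v_2)$, handled by direct counting as in the rectangle example, and a right triangle over the oblique side. For the triangle, I would use the point reflection $x \mapsto u+v-x$. It maps $\Z^2$ to itself and maps the triangle onto its complement in the bounding rectangle, swapping points strictly below and strictly above the segment and fixing the segment. So the weighted count of the triangle is half the count of the rectangle, which is $\frac12(u_1-v_1)\abs{u_2-v_2}$. Verifying that the boundary weights ($\frac14$ at end columns, $0$ on the segment, the convention at height $0$ from $w$) match exactly under this symmetry is where I expect the careful bookkeeping.
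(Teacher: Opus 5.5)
Your reduction is correct and takes a different route from the paper's. The paper truncates to the symmetric box $Q=\{-r,\ldots,r\}^2$, asserting without proof that $\Dang(P-q)=0$ outside it, and then swaps finite sums. Your telescoping correction $c_q$ instead makes each edge sum finite over all of $\Z^2$, and it proves the finite-support claim along the way.

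\textbf{The gap is the edge identity you reduce to.} Because your weight has $w(0)=-1$, the identity $\welp(u,v)=\area(u,v)$ is false. Take $u_1>v_1$, and let $f=\frac12$ on interior columns and $f=\frac14$ on the end columns $q_1\in\{v_1,u_1\}$. Then the bracket is $f\bigl(\sigma(q)-w(q_2)\bigr)$, where $\sigma=+1,0,-1$ for $q$ below, on, above the line. Two consequences follow:
\begin{enumerate}
\item In the band the bracket is $\pm 2f$, i.e.\ $\pm1$ resp.\ $\pm\frac12$, not $\pm\frac12$ resp.\ $\pm\frac14$ as you wrote.
\item The row $q_2=0$ is over-counted by $f$ in every column. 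When the line lies above it, that row gets $2f$, so the bottom side of your trapezoid gets full weight instead of Pick's half weight.
\end{enumerate}
A concrete counterexample: for $u=(1,0)$, $v=(0,0)$ we have $\area(u,v)=0$. The bracket equals $\frac14$ at $q=(0,0)$ and at $q=(1,0)$ and vanishes elsewhere, so $\welp(u,v)=\frac12$. In general, with $h$ the height of the line in a column, the column sum is $f\bigl(\lfloor h\rfloor+\lceil h\rceil+1\bigr)$. Hence $\welp(u,v)=\area(u,v)+\frac12(u_1-v_1)$ for all $u,v\in\Z^2$, and no bookkeeping in the rectangle/triangle split can remove this defect.

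\textbf{The repair is easy, in either of two ways.} First, you can keep $w$ and note that the defect $\frac12(u_1-v_1)$ telescopes to zero around the closed polygon. Second, you can use $w(t)=-\sign(t)$, so that the row $q_2=0$ gets weight $f\sigma(q)$. Each column sum then becomes $f\bigl(\lfloor h\rfloor+\lceil h\rceil\bigr)$, which is exactly the column sum of the paper's box-restricted $\welp$ over $q_2\in\{-r,\ldots,r\}$. Since $q_1\mapsto u_1+v_1-q_1$ preserves $f$ and sends $h\mapsto u_2+v_2-h$, pairing columns gives $\sum f\bigl(\lfloor h\rfloor+\lceil h\rceil\bigr)=(u_2+v_2)\sum f=\frac12(u_1-v_1)(u_2+v_2)=\area(u,v)$ directly, with no rectangle/triangle split.

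This is the paper's steps (2)--(3) written in column form. The symmetry of the paper's box about height $0$ is exactly what fixes the convention at $q_2=0$, and your asymmetric $w$ breaks it.
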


\begin{remark}
  Notice that this formulation does not require $P$ to be simple.
  If $P$ is simply closed and positively oriented, 
  then the right hand side equals $I + J/2 - 1$,
  by Jordan and Hopf.

  At first glance Pick's lemma may no longer look like Pick's classical theorem.
  Admittedly, the quantity $I + J/2 - 1$ is geometrically more intuitive,
  alas notoriously vague. (“Just look!”)
  Our formula for $\Welp(P)$ provides a precise definition,
  alas less intuitive. (“Just calculate!”)
  
  We cautiously call the above statement \emph{Pick's lemma}.
  In order to arrive at Pick's theorem,
  we have to invoke Jordan's decomposition and Hopf's umlaufsatz
  for simply closed polygonal curves.
  This bridges the gap between the algebraic lemma
  and the geometric theorem.

  Since Jordan's and Hopf's are classical theorems in their own right,
  we do not consider them as part of the \emph{proof} of Pick's lemma,
  but rather as the foundation to \emph{motivate} or \emph{interpret} the statement.
  This groundwork being provided, we can now focus on proving Pick's lemma.
\end{remark}

\section{Proof of Pick's lemma}

The sum \eqref{eq:Welp} over $q \in \Z^2$ has finite support:
We can restrict it to a sufficiently large
square box $Q = \{-r,\ldots,r\}^2 \subset \Z^2$
containing all vertices $p_1,\ldots,p_n$.
Each point $q \in \Z^2 \minus Q$ yields $\Dang(P-q) = 0$.
Using this restriction, we can swap the double sum: 
\begin{alignat}{3}
  & \Welp(P)
  && := \sum_{q \in \Z^2} \Dang(P-q)
  && = \sum_{q \in Q} \sum_{i=1}^n \dang(p_{i-1}-q, p_i-q) 
  \\ \notag & && && = \sum_{i=1}^n
  \underbracket{ \sum_{q \in Q} \dang(p_{i-1}-q, p_i-q) }_{\normalsize =: \welp(p_{i-1},p_i)}
\end{alignat}

Now both sides of Pick's equation look formally very similar:
\begin{alignat}{3}
  & \Area(P) && = \sum_{i=1}^{n} \area(p_{i-1},p_i) ,
  \\
  & \Welp(P) && = \sum_{i=1}^n \welp(p_{i-1},p_i) .
\end{alignat}

Here another miracle happens:
Both sums are not only equal, but termwise equal!

For any to lattice points $u,v \in Q$
in our square box $Q = \{-r,\ldots,r\}^2 \subset \Z^2$
we show that
\begin{align}
  \area(u,v) = \welp(u,v) .
\end{align}

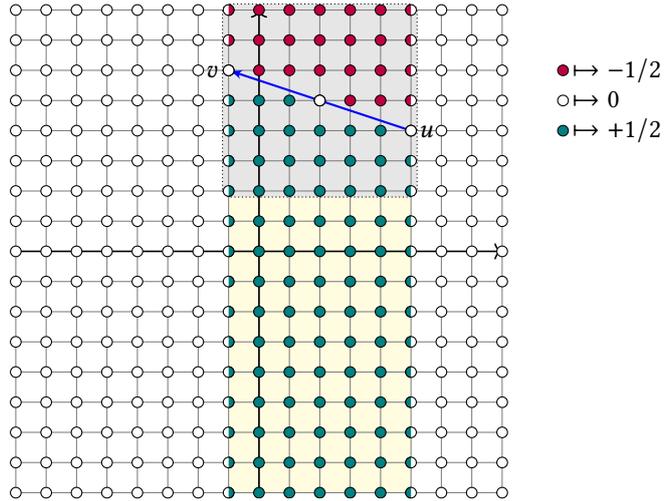
\begin{figure}[ht]
  \begin{tikzpicture}[x=4mm, y=4mm, font=\mathstrut]
    \draw[draw=none] (-16,-8) rectangle (+16,+8);
    \draw[draw=none, fill=yellow!50, fill opacity=0.3]
    (-1.0,-8.0) rectangle (+5.0,+2.0);

    \draw[draw=black, densely dotted, rounded corners=1mm, fill=black!10]
    (-1.2,+1.8) rectangle (+5.2,+8.2);
    \draw[color=black!50, step=1] (-8,-8) grid (+8,+8);
    \draw[semithick, ->] (-8,0) -- (+8,0); 
    \draw[semithick, ->] (0,-8) -- (0,+8); 
    
    \filldraw[draw=blue, thick, -latex'] (+5,+4) -- (-1,+6);
    \draw[draw=black, fill=white, radius=2.0pt]
    (+5,+4) circle node[right] {$u$}
    (-1,+6) circle node[left] {$v$}
    (+2,+5) circle;
    
    \foreach \x in {-8,-7,...,-2,6,7,...,8}{
      \foreach \y in {-8,...,8}{
        \draw[draw=black, fill=white] (\x,\y) circle[radius=2.0pt];
      }
    }
    
    \draw[draw=black, fill=white] (10,5) circle[radius=2.0pt];
    \draw (10,5) node[right] {$\mapsto 0$};
    
    \begin{scope}
      \foreach \y in {7,...,8}{
        \draw[draw=black, fill=white] (-1,\y) circle[radius=2.0pt];
      }
      \foreach \y in {5,...,8}{
        \draw[draw=black, fill=white] (+5,\y) circle[radius=2.0pt];
      }
      \clip (+5,+4.0) -- (+5,+8.2) -- (-1,+8.2) -- (-1,+6.0) -- cycle;
      \foreach \x in {-1,...,5}{
        \foreach \y in {-8,...,8}{
          \draw[draw=black, fill=purple] (\x,\y) circle[radius=2.0pt];
        }
      }
    \end{scope}
    
    \draw[draw=black, fill=purple] (10,6) circle[radius=2.0pt];
    \draw (10,6) node[right] {$\mapsto -1/2$};
    
    \begin{scope}
      \foreach \y in {-8,...,5}{
        \draw[draw=black, fill=white] (-1,\y) circle[radius=2.0pt];
      }
      \foreach \y in {-8,...,3}{
        \draw[draw=black, fill=white] (+5,\y) circle[radius=2.0pt];
      }
      \clip (-1,-8.2) -- (+5,-8.2) -- (+5,+4.0) -- (-1,+6.0) -- cycle;
      \foreach \x in {-1,...,5}{
        \foreach \y in {-8,...,8}{
          \draw[draw=black, fill=teal] (\x,\y) circle[radius=2.0pt];
        }
      }
    \end{scope}
    
    \draw[draw=black, fill=white]
    (-1,6) circle[radius=2.0pt]
    (+2,5) circle[radius=2.0pt]
    (+5,4) circle[radius=2.0pt];
    
    \draw[draw=black, fill=teal] (10,4) circle[radius=2.0pt];
    \draw (10,4) node[right] {$\mapsto +1/2$};
  \end{tikzpicture}
  \caption{Proving $\welp(u,v) = \area(u,v)$ by inspecting $q \mapsto \dang(u-q,v-q)$}
  \label{fig:AreaEqualsWelp}
\end{figure}


The proof is illustrated in \autoref{fig:AreaEqualsWelp}
by plotting $q \mapsto \dang(u-q,v-q)$.
The algebraic calculation proceeds as follows.
We assume $v_1 < u_1$ and $u_2 + v_2 \ge 2$;
the other cases are symmetric.
\begin{enumerate}
\item
  For $q_1 < v_1$ or $q_1 > u_1$ we have $\dang(u-q,v-q) = 0$.
\item
  The rectangle $R = \{v_1,\ldots,u_1\} \times \{u+v-r,\ldots,r\}$
  allows the involution $q \mapsto u+v-q$.
  We find $\dang(u-(u+v-q), v-(u+v-q)) = \dang(v-q,u-q) = -\dang(u-q,v-q)$.
  Thus all summands cancel pairwise,
  and we obtain $\sum_{q \in R} \dang(u-q,v-q) = 0$.
\item
  The remaining summands add up to $\frac{1}{2} (u_1-v_1) (u_2+v_2) = \area(u,v)$.
  More precisely, we find $\dang(u-q,v-q) = \sfrac{1}{2}$
  for $q \in \{u_1+1,\ldots,v_1-1\} \times \{-r,\ldots,u_2+v_2-r-1\}$
  and $\dang(u-q,v-q) = \sfrac{1}{4}$
  for $q \in \{u_1,v_1\} \times \{-r,\ldots,u_2+v_2-r-1\}$.
\end{enumerate}

This proves $\welp(u,v) = \area(u,v)$
for any edge with end points $u,v \in Q$.
By summing over all edges of our polygon $P$,
we conclude $\Area(P) = \Welp(P)$, as claimed. 


\begin{remark}
  In this algebraic form, the proof is readily implemented in a proof assistant.
  For a proof in Lean see \url{https://github.com/Palamedez314/PicksTheorem2025}.
  This completes the promised formalization.
\end{remark}

\appendix

\setcounter{section}{0}
\section{Axiomatic angle measure}

So far we have focused on Pick's lemma as our central contribution.
Two more classical theorems complete the picture:
First, we need Jordan's decomposition to formulate Pick's theorem precisely, 
translating it from geometric to algebraic terms.
Second, we add Hopf's umlaufsatz to interpret the counting result 
in a more geometric and user-friendly fashion.

\begin{figure}[ht]
  \begin{tikzpicture}[x=10mm, y=10mm]
    \draw (-3,+1) node[draw=black, inner sep=3pt, font=\strut]{polygons};
    \draw (+3,+1) node[draw=black, inner sep=3pt, font=\strut]{counting};
    \draw (-3,-1) node[draw=black, inner sep=3pt, font=\strut]{winding};
    \draw (+3,-1) node[draw=black, inner sep=3pt, font=\strut]{summing};
    \draw[-latex'] (-2.1,+1.0) -- (+2.1,+1.0) node[midway, above]{Pick's theorem};
    \draw[-latex'] (-2.1,-1.0) -- (+2.1,-1.0) node[midway, above]{Pick's lemma};
    \draw[-latex'] (-3.0,+0.6) -- (-3.0,-0.6) node[midway, left]{Jordan};
    \draw[-latex'] (+3.0,-0.6) -- (+3.0,+0.6) node[midway, right]{Hopf};
  \end{tikzpicture}
  \caption{We transform Pick's Theorem to Pick's lemma}
  \label{fig:PickTransform}
\end{figure}
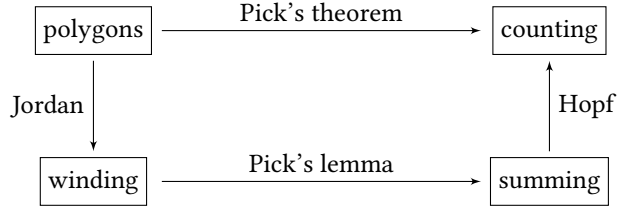


We can use different angle measures for the theorems of
Jordan and Hopf and finally Pick.
In order to cover all cases simultaneously, 
we extract the essential properties used in the proofs:

\begin{definition}[angle measure\label{def:AngleMeasure}]
  Let $(\K,+,\cdot,\le)$ be an ordered field.
  An \emph{angle measure} is a map $\mu \colon \K^2 \times \K^2 \to \K$
  satisfying the following conditions:
  \begin{enumerate}
  \item
    Scaling: For all $u,v \in \K^2$ and $\lambda \in \K_{>0}$
    we have $\mu(u,v) = \mu(\lambda u, v) = \mu(u, \lambda v)$.
  \item
    Symmetry: $\mu(v,u) = -\mu(u,v)$ and $\mu(-u,-v) = \mu(u,v)$.
  \item
    Addition: $\mu(u,v) = \mu(u,s) + \mu(s,v)$ for $s \in [u,v]$.
  \item
    Normalization: $\mu(e_1,e_2) = \sfrac{1}{4}$.
  \end{enumerate}
\end{definition}

Examples include the euclidean angle measure $\ang$
and the discrete angle measure $\dang$.

\setcounter{section}{9}
\section{Jordan's decomposition for polygons}

Jordan's decomposition for polygons can be formulated
and proven using any angle measure. 
We work over any ordered field $(\K,+,\cdot,\le)$
and continue to use the notation of the introduction.
Let $P = (p_0,p_1,\ldots,p_n)$ be a polygon in $\K^2$
with corresponding path $\gamma \colon [0,1] \to \K^2$
and image curve $C = \gamma([0,1]) \subset \K^2$.
Using the angle measure $\mu$, we define the winding number 
\begin{align*}
  \mu(P) := \sum_{i=1}^n \mu(p_{i-1},p_i) .
\end{align*}

\begin{theorem}
  Let $P = (p_0,p_1,\ldots,p_n)$ be a polygon in $\K^2$,
  simply closed and positively oriented.
  
  (1) The complement $X := \K^2 \minus C = A \sqcup B$
  decomposes into two connected components, namely  
  \begin{align*}
    A & = \set{ q \in X | \mu(P-q) = 0 } ,
    \\
    B & = \set{ q \in X | \mu(P-q) = 1 } .
  \end{align*}

  (2) The curve $C$ is the boundary of $A$ and of $B$ within the plane $\K^2$.

  (3) At each boundary point $q \in C$, the value $\mu(P-q)$ measures the interior angle:

  (3a) At each point $q \in \ee{p_{i-1}}{p_i}$ in the interior of an edge,
  we find $\mu(P-q) = \sfrac{1}{2}$.
  
  (3b) At each vertex $q = p_i$ we find
  $\mu(P-p_i) = \beta_i := \mu(p_{i+1}-p_i, p_{i-1}-p_i)$.
\end{theorem}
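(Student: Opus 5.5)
We prove the polygonal Jordan theorem using only the four axioms of Definition~\ref{def:AngleMeasure}, organised around the function $w(q) := \mu(P-q)$ on $X = \K^2 \minus C$. First we establish the basic calculus of $\mu$. From symmetry, $\mu(u,u)=0$. From scaling and symmetry, $\mu(u,-u)$ is forced: by addition through $s=0$, $\mu(u,-u) = \mu(u,0)+\mu(0,-u)$, and the scaling axiom applied with $\lambda u$ degenerating forces both terms to vanish. This matches the convention $\ang(u,-u)=0$. Next, by normalization, scaling and addition, the four quarter turns give $\mu(e_1,e_2)+\mu(e_2,-e_1)+\mu(-e_1,-e_2)+\mu(-e_2,e_1) = 1$. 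This is the total turning we will need.

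The key local lemma is \emph{invariance}: $w$ is locally constant on $X$. We show that moving $q$ to a nearby $q'$ along a segment $[q,q']$ not meeting $C$ leaves $w$ unchanged. The plan is to subdivide so that for each edge $[p_{i-1},p_i]$ the triangle with vertices $q,q',p_{i-1}$ (resp.\ $p_i$) avoids $C$ near the origin. We then use addition along the sides of the triangle $(p_{i-1}-q, p_i-q)$ versus $(p_{i-1}-q', p_i-q')$. A cleaner route is a cocycle identity: for any three vectors $a,b,c$ whose triangle does not contain $0$, $\mu(a,b)+\mu(b,c)+\mu(c,a)=0$. Applying it edge by edge and telescoping over the closed polygon gives $\mu(P-q)=\mu(P-q')$. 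This cocycle identity, derived purely from the axioms over an arbitrary ordered field, is the main obstacle. I would first prove it when $0$ lies outside the triangle by reducing, via scaling and addition, to the case where $a,b,c$ lie in an open half-plane. There I would parametrise directions by slopes, which is ordered-field friendly, and show additivity along a projective line.

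With invariance in hand, the remaining steps follow in order.

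\begin{enumerate}
\item \emph{Values $0$ and $1$.} Far away, $P-q$ lies in a half-plane, so $w(q)=0$ by the cocycle identity. Crossing an edge transversally at an interior point changes $w$ by exactly $\pm1$: compare $q\pm\varepsilon n$, use invariance to shrink to a tiny neighbourhood, and evaluate the single edge contribution near $\pm\tfrac12$ via the quarter-turn computation. The sign is fixed by positive orientation. So $w$ takes only the values $0$ and $1$.
\item \emph{Connectedness.} Each of $A$ and $B$ is connected by the standard argument of following parallel to $C$ at small distance, using simplicity to get a uniform positive clearance from non-adjacent edges.
\item \emph{Boundary, part (2).} Every point of $C$ is approached from both sides, giving (2).
\item \emph{Part (3a).} For $q$ in an open edge, the two edge vectors $p_{i-1}-q$ and $p_i-q$ are opposite, so that edge contributes $0$. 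The remaining closed path is handled by the same limiting and crossing argument, giving the average $\tfrac12$ of the two neighbouring values $0$ and $1$. More precisely, we compare with $q\pm\varepsilon n$ via the cocycle identity.
\item \emph{Part (3b).} At a vertex $q=p_i$ the two incident edges contribute $0$. The rest of the sum equals, by the cocycle identity, the angle swept from $p_{i+1}-p_i$ to $p_{i-1}-p_i$, namely $\beta_i$.
\end{enumerate}
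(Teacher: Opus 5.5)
Your outline follows the classical route that the paper only points to; the paper gives no proof, just ``as in the classical setting'' and the hint that (3) follows from (1) by cutting corners. Your invariance step is sound. The cocycle identity is in fact immediate: if $0\notin\mathrm{conv}\{a,b,c\}$, the three vectors lie in an open half-plane, and one of them is a positive multiple of a point on the segment joining the other two. Scaling, addition and antisymmetry then give the identity. Applied to the parallelogram swept by each edge, it yields $\mu(P-q)=\mu(P-q')$ whenever $[q,q']\cap C=\emptyset$.

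\textbf{Gap 1: the full turn.} You claim the four quarter turns sum to $1$ ``by normalization, scaling and addition'', and this does not follow. The axioms give $\mu(e_1,e_2)=\mu(-e_1,-e_2)=\tfrac14$ and $\mu(e_2,-e_1)=\mu(-e_2,e_1)$, but nothing fixes the latter value. Take $\mu(u,v):=\tfrac12\dang(Ru,Rv)$ with $R(x,y)=(x-y,x+y)$. It satisfies all four axioms, since $R$ is linear and maps segments to segments. Then $\mu(e_1,e_2)=\tfrac12\dang\bigl((1,1),(-1,1)\bigr)=\tfrac14$, but $\mu(e_2,-e_1)=\tfrac12\dang\bigl((-1,1),(-1,-1)\bigr)=0$. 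For the positively oriented diamond $P=(e_1,e_2,-e_1,-e_2,e_1)$ this gives $\mu(P)=\tfrac14+0+\tfrac14+0=\tfrac12$, although $0$ is an interior point. So every crossing changes $w$ by $\pm\tfrac12$, and part (1) itself fails for this $\mu$. Your step ``crossing an edge changes $w$ by exactly $\pm1$'' therefore cannot be derived from the four axioms. You need an extra normalization such as $\mu(e_2,-e_1)=\tfrac14$, which holds for $\ang$ and $\dang$.

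With that axiom added, your steps for (1), (2) and (3a) go through. The exception is ``the sign is fixed by positive orientation'', which still needs an argument linking $\Area(P)>0$ to the side where $w=+1$. A smaller point: $\mu(u,-u)=0$ does not come from a degenerate scaling, which the axiom forbids. It comes from $\mu(u,-u)=\mu(u,0)-\mu(-u,0)=0$ by central symmetry; $\mu(u,0)$ itself need not vanish.

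\textbf{Gap 2: step (3b).} Grant for now that the two incident edges contribute $0$. The remaining open path $p_{i+1}-p_i,\dots,p_{i-1}-p_i$ avoids $0$ but in general winds around it, so the cocycle identity does not collapse it to $\beta_i$. Cutting a small corner at $p_i$ instead gives $\mu(P-p_i)=\beta_i+\mu(P'-p_i)$ for the cut polygon $P'$. Here $\mu(P'-p_i)$ is $0$ at a convex vertex but $1$ at a reflex one. For the L-shape $P=\bigl((0,0),(2,0),(2,1),(1,1),(1,2),(0,2),(0,0)\bigr)$ and $p_3=(1,1)$, one computes $\Ang(P-p_3)=\Dang(P-p_3)=\tfrac34$, while $\beta_3=\mu(e_2,e_1)=-\tfrac14$. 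At a straight vertex one gets $\tfrac12$ against $\beta_i=0$. So (3b) holds only at convex vertices, or modulo $1$.

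Moreover, ``the incident edges contribute $0$'' uses $\mu(u,0)=0$, which the axioms do not give. Even with the extra normalization, they are invariant under $\mu\mapsto\mu+g(u)-g(v)$ for any $g$ with $g(\lambda u)=g(u)=g(-u)$ for $\lambda>0$ and $g(e_1)=g(e_2)$. This change leaves every closed sum $\mu(P-q)$ unchanged but shifts $\beta_i$ by $g(p_{i+1}-p_i)-g(p_{i-1}-p_i)$. So even at convex vertices, (3b) is not a consequence of the axioms and must be checked for the concrete measures $\ang$ and $\dang$.

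These are defects of the statement in its axiomatic generality, and your proposal papers over them rather than resolving them.
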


\begin{remark}
  We assume positive orientation, so that $\area(P) > 0$.
  Geometrically this ensures that, when traveling along the path $\gamma$,
  the interior region $B$ will be on the left of our curve $C$.
  
  (1) The usual topological notions over $\R$
  now acquire an algebraic flavour over $\K$:
  Two points $a,b \in X$ are \emph{connected} in $X$,
  if they can be connected by a polygonal path within $X$.
  
  (2) The fact that $C$ is the common boundary means that
  each point $q \in C$ has arbitrarily close neighbours in $A$ and in $B$.
  These can be reached by the two bisectors of the angle at $q$.

  (3) The extension to boundary points $q \in C$ 
  follows from (1) by cutting the corner at $q$. 
  It is not only oddly satisfying but also practically useful.
  In particular it leads us to Pick's theorem. 
\end{remark}

A pen-and-paper proof 
can be established as in the classical setting,
and then implemented in a proof assistant. 
This foundational work usually requires
“augmenting the existing geometry libraries” \cite{Binder:Kosaian:2024}.
It merits further exploration beyond the scope of the modest article.

\setcounter{section}{7}
\section{Hopf's umlaufsatz for polygons}

Hopf's umlaufsatz for polygons can be formulated
and proven using any angle measure $\mu$.

\begin{theorem}
  Let $P = (p_0,p_1,\ldots,p_n)$ be a simply closed
  polygon in $\K^2$ and positively oriented.
  
  At each vertex $p_i$, cyclically indexed
  by integers modulo $n$, we define the turning angle
  \begin{align*}
    \alpha_i := \mu(p_{i+1}-p_i, p_i-p_{i-1}) .
  \end{align*}

  Then the total tangent turning number (or umlaufzahl)
  adds up to $\sum_{i=1}^n \alpha_i = 1$.
\end{theorem}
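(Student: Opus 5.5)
We prove Hopf's umlaufsatz by reducing it to the Jordan decomposition of the previous section, using only the axioms of an angle measure (\autoref{def:AngleMeasure}) plus the corner identity (3b). Fix a vertex $p_i$. Write $a := p_{i-1}-p_i$ and $b := p_{i+1}-p_i$. Symmetry gives $\mu(-a,-b)=\mu(a,b)$, hence
\[
\alpha_i = \mu(b,-a) = -\mu(-a,b).
\]
The addition axiom, applied to a suitable intermediate direction, should give a local relation $\alpha_i + \beta_i = \sfrac{1}{2}$ at each vertex, where $\beta_i = \mu(b,a)$ is the interior angle. The idea is that going from $b$ to $a$ and then from $a$ to $-a$ sweeps a straight angle. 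For a degenerate direction $a$ versus $-a$ one uses $\mu(a,-a)=\sfrac12$ with the appropriate sign; this follows from normalization plus additivity through $e_2$ after scaling and rotation. The remark on the euclidean case already asserts this: adding the turning angle makes each vertex count as $\sfrac12$. Consequently
\[
\sum_i \alpha_i = \frac{n}{2} - \sum_i \beta_i .
\]
Since the $\alpha_i$ are defined cyclically modulo $n$, we need $p_n=p_0$ with the cyclic convention; this holds because $P$ is simply closed.

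Next, we compute $\sum_i \beta_i = \sum_i \mu(P-p_i)$ by Jordan (3b), and compare it with an interior count. The cleanest route is the classical one: choose a point $q\in B$ and triangulate the angle sum. Alternatively, pass to a nearby point. Move each vertex slightly into $B$ along the bisector, which is possible by Jordan (2). This yields points $q_i\in B$ with $\mu(P-q_i)=1$. Using the additivity axiom along edges, $\mu(P-q_i)-\mu(P-p_i)$ should be expressible as $\sfrac12 - \beta_i$ plus edge contributions, and these edge contributions telescope.

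A more robust alternative, which I would try first, is induction on $n$ by ear cutting. For $n=3$, a positively oriented triangle has $\beta_1+\beta_2+\beta_3=\sfrac12$: expand $\mu$ of the three edge directions using addition and symmetry, and the angles of a triangle sum to a straight angle. Hence $\sum\alpha_i = \sfrac32-\sfrac12 = 1$. For $n>3$, pick a diagonal $p_jp_k$ lying in $\bar B$, whose existence requires a Jordan-type argument. It splits $P$ into two simply closed positively oriented polygons of sizes $n_1+n_2 = n+2$. The interior angles at $p_j$ and $p_k$ split additively by the addition axiom, since the diagonal direction lies between the two edges there. Therefore
\[
\sum\beta = \Bigl(\frac{n_1}{2}-1\Bigr)+\Bigl(\frac{n_2}{2}-1\Bigr) = \frac{n}{2}-1,
\]
which gives $\sum_i\alpha_i = 1$.

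The main obstacle is the existence of an interior diagonal over a general ordered field, together with verifying that both sub-polygons remain simple and positively oriented. I would use the standard argument: take the lowest-leftmost vertex, which is convex. Then either its two neighbours span a diagonal, or the vertex inside that triangle farthest from the line through the two neighbours does. This uses only ordering and determinants, so it works over $\K$. A secondary delicate point is the additivity step $\alpha_i+\beta_i=\sfrac12$ when the "between" direction is not literally in the segment $[u,v]$. This will require first rescaling via the scaling axiom so that the intermediate vector lies on the segment, and then handling reflex angles by splitting them into two convex pieces.
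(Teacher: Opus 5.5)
Your load-bearing step is the local identity $\alpha_i+\beta_i=\sfrac{1}{2}$. It is not established, and the justification you sketch cannot work:
\begin{itemize}
\item The two symmetry axioms give $\mu(a,-a)=\mu(-a,a)=-\mu(a,-a)$, so $\mu(a,-a)=0$ for every angle measure.
\item Addition cannot be routed through $e_2$, because the only intermediate points it allows lie on $[a,-a]$, which passes through $0$.
\item No rotation invariance is available: $\dang((1,1),(-1,1))=\sfrac{1}{2}$, whereas the quarter-turned pair gives $\dang((-1,1),(-1,-1))=0$.
\end{itemize}
What you actually need is that a straight angle measures $\sfrac{1}{2}$, and this does not follow from the four axioms at all. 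With $Au:=(u_1+u_2,\,u_2-u_1)$, put $\nu(u,v):=\dang(Au,Av)$. Scaling, symmetry and addition survive linear substitution and sums, and $\nu(e_1,e_2)=\dang((1,-1),(1,1))=0$, so $\mu:=\dang+\nu$ satisfies Definition~\ref{def:AngleMeasure}. But $\nu(e_2,-e_1)=\nu(-e_2,e_1)=\sfrac{1}{2}$, so the positively oriented unit square has $\sum_i\mu(p_i-p_{i-1},p_{i+1}-p_i)=1+1=2$. Hence a general $\mu$ needs an extra normalization such as $\mu(e_2,-e_1)=\sfrac{1}{4}$, which holds for $\ang$ and $\dang$. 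The paper's proof uses this tacitly when it values each of its two half-turn sweeps at $\sfrac{1}{2}$. Your triangle base case (``the angles of a triangle sum to a straight angle'') rests on the same fact. Watch the sign as well: the statement's $\alpha_i$ has its arguments swapped relative to \eqref{eq:Hopf}, so taken literally $\sum_i\alpha_i=-1$. With your $\alpha_i=\mu(b,-a)$ and $\beta_i=\mu(b,a)$, every corner of the unit square gives $\alpha_i=-\sfrac{1}{4}$ and $\beta_i=\sfrac{1}{4}$ for $\ang$ or $\dang$, so $\alpha_i+\beta_i=0$.

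Even with $\alpha_i=\mu(-a,b)$, and already for $\ang$ and $\dang$, $\mu(b,a)$ is not the interior angle at every corner. At a right turn, say incoming $e_1$ and outgoing $-e_2$, you get $\alpha_i=\mu(b,a)=-\sfrac{1}{4}$, so $\alpha_i+\mu(b,a)=-\sfrac{1}{2}$. At a straight corner, $\alpha_i=\mu(b,a)=0$. Hence $\sum_i\mu(b_i,a_i)<\frac{n}{2}-1$ as soon as such corners occur. Jordan (3b) in its two-argument form is off there too: $\mu(P-p_i)=\sfrac{3}{4}$ for $\ang$ at that right turn. Moreover, Jordan is only stated, not proved, in the paper, and for the $\mu$ above its part (1) fails as well, since interior points have $\mu(P-q)=2$.

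In your ear-cutting step, a diagonal leaving a reflex endpoint points outside the cone spanned by $b$ and $a$. No rescaling puts it on $[b,a]$, so the addition axiom does not split $\mu(b,a)$. You must instead work with the true interior angle $\beta_i^\ast$, a sum of two convex pieces, and proving $\alpha_i+\beta_i^\ast=\sfrac{1}{2}$ there is again the straight-angle fact. So your ``secondary'' point is the same main gap.

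The paper sidesteps interior angles, diagonals and Jordan altogether. After normalizing at a lowest vertex, Hopf's discrete homotopy on the secants $s_{i,j}$ uses only addition (for the triangles) and disjointness of non-adjacent edges (for the squares). This reduces the umlaufzahl to two half-turn sweeps. Your induction can be completed, since the lowest-vertex diagonal argument works over $\K$. But it needs the extra normalization, explicit reflex bookkeeping, and a check that both sub-polygons are simple and positively oriented.
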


\begin{remark}
  The turning angle $\alpha_i$ and the interior angle $\beta_i$
  add up to $\sfrac{1}{2}$.  Summing over all vertices, we obtain
  $n/2 = \sum_{i=1}^n \alpha_i + \beta_i = 1 + \sum_{i=1}^n \beta_i$,
  whence $\sum_{i=1}^n \beta_i = n/2 - 1$.

  The simplest case $n=3$ is the familiar statement that,
  in any triangle, the interior angles add up
  to $\beta_1 + \beta_2 + \beta_3 = 1/2$,
  corresponding to $\pi/2$ or $180^\circ$.
\end{remark}

Together with Jordan's theorem, we obtain the desired
point count $\sum_{q \in \Z^2 \cap C} \mu(P-q) = J/2 - 1$.

\subsection{Formalizing Hopf's umlaufsatz}

A proof of Hopf's umlaufsatz is straight-forward for triangles.
It immediately extends to convex polygons and
more generally to star-shaped polygons.
It seems intuitively plausible for any simply closed polygon,
but in this general case a proof is far from obvious.
It becomes false for non-simply closed polygons.

\begin{figure}[ht]
  \begin{tikzpicture}[x=5mm, y=5mm, font=\footnotesize\mathstrut]
    \begin{scope}[shift={(0,1)}]
      \draw[line width=1pt, blue, fill=yellow!50, fill opacity=0.3]
      (0,2) -- (2,0) -- (5,0) -- (5,2) -- (4,3) -- (5,5) --
      (2,6) -- (0,3) -- (2,5) -- (4,1) -- (3,1) -- (2,3) -- cycle;
      \draw[inner sep=1pt, font=\mathstrut] 
      (2,0) node[below left]{$p_{n-2}$}
      (3,0) node[below]{$p_{n-1}$}
      (4,0) node[below right]{$p_0$}
      (5,0) node[below right]{$p_1$};
      \foreach \x/\y in { 0/2, 2/0, 3/0, 4/0, 5/0, 5/2, 4/3, 5/5, 2/6, 0/3, 2/5, 4/1, 3/1, 2/3 }{
        \draw[fill=black] (\x,\y) circle[radius=1pt];
      }
    \end{scope}
    \begin{scope}[shift={(9,0)}]
      \draw[draw=black!30, step=1.0] (0,1) grid (5,6);
      \draw[-latex'] (0,1) -- (6,1) node[above, xshift=-4pt]{$i$};
      \draw[-latex'] (0,1) -- (0,7) node[right, yshift=-4pt]{$j$};
      \draw (0,1) node[below]{$0$} (2.5,1) node[below]{$\ldots$} (5,1) node[below]{$n{-}2$};
      \draw (0,1) node[left]{$1$}  (0,3.5) node[left]{$\vdots$}  (0,6) node[left]{$n{-}1$};
      \foreach \i [
        evaluate= \i as \ii using {int(\i+2)}
      ] in {0,...,3}{
        \foreach \j in {\ii,...,5}{
          \draw[draw=black, fill=teal!10] (\i,\j) rectangle (\i+1,\j+1);
        }
      }
      \foreach \i in {0,...,4}{
        \draw[draw=black, fill=blue!10] (\i,\i+1) |- (\i+1,\i+2) -- cycle;
      }
      \foreach \i/\j in {0/1, 1/2, 2/3, 3/4, 4/5, 5/6, 4/6, 3/6, 2/6, 1/6, 0/6, 0/5, 0/4, 0/3, 0/2}{
        \draw[fill=black] (\i,\j) circle[radius=1pt];
      }
      \draw (3.5,3.5) node[right, align=center]{
        discrete homotopy \\ from the diagonal \\ to both sides};
    \end{scope}
  \end{tikzpicture}
  \caption{Hopf's homotopy argument for his umlaufsatz}
  \label{fig:HopfTheorem}
\end{figure}
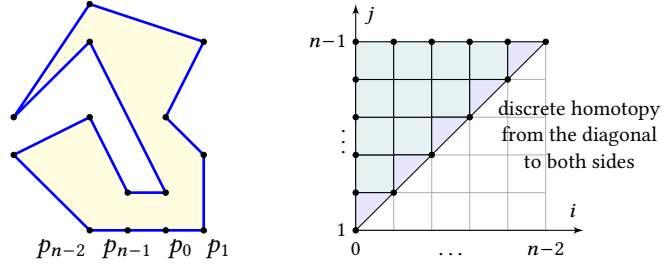

Hopf first formulated his umlaufsatz \cite{Hopf:1935} for simply closed paths
$\gamma \colon [0,1] \to \R^2$ that are continuously differentiable,
and proved it by an ingenious homotopy argument.
%
%
He then showed that it suffices to require
a \emph{piecewise} continuously differentiable curve.
For future reference, we sketch an algebraic version
for polygons over any ordered field $(\K,+,\cdot,\le)$,
using any angle measure $\mu$.
The key idea is adapted from Hopf's original
and illustrated in \autoref{fig:HopfTheorem}.

\begin{proof}[Hopf's homotopy proof]
  We consider the vertices $p_1,\ldots,p_n=p_0$ with $p_i = (x_i,y_i) \in \K^2$.
  After renumbering, we can assume that $y_0$ is minimal.
  After translation we have $p_0 = (0,0)$ and $y_i \ge 0$ for all $i$.
  By inserting small edges around $p_0$,
  we can achieve $0 = y_0 = y_1 = y_{n-1} = y_{n-2}$
  and $x_{n-2} < x_{n-1} < x_0 < x_1$.
  (This is shown on the left of \autoref{fig:HopfTheorem}.)
  
  For $0 \le i < j \le n-1$ we consider the secant $s_{i,j} := p_j - p_i$.
  From $s_{0,1}$ via $s_{0,j}$ to $s_{0,n-1}$ we perform a half turn,
  then via $s_{i,n-1}$ to $s_{n-2,n-1}$ we perform another half turn.
  Both add up to $1$.  (This uses that our polygon is simple,
  a degenerate polygon with $0 = y_0 = \ldots = y_n$ yields $0$.)
  
  If, moreover, our polygon $P$ is simply closed,
  then this movement corresponds to the umlaufzahl.
  More precisely, we prove the following equation by a discrete homotopy:
  \begin{align}
    \label{eq:Hopf}
    \sum_{k=1}^{n-2} \mu( s_{k-1,k}, s_{k,k+1} )
    \;=\;
    \sum_{j=1}^{n-2} \mu( s_{0,j}, s_{0,j+1} ) +
    \sum_{i=1}^{n-2} \mu( s_{i-1,n-1}, s_{i,n-1} )
  \end{align}
  \begin{enumerate}
  \item
    Triangles: For $0 \le i \le n-2$ and $j = i+1$ we find
    \begin{align*}
      \mu( s_{i-1,i}, s_{i,i+1} ) =
      \mu( s_{i-1,i}, s_{i-1,i+1} ) +
      \mu( s_{i-1,i+1}, s_{i,i+1} ) .
    \end{align*}
  \item
    Squares: For $0 \le i < n-2$ and $i+1 < j \le n-1$ we find
    \begin{align*}
      0 = \mu( s_{i,j}, s_{i,j+1} ) + \mu( s_{i,j+1}, s_{i+1,j+1} )
      - \mu( s_{i,j}, s_{i+1,j} ) - \mu( s_{i+1,j}, s_{i+1,j+1} ) .
    \end{align*}
    This essentially uses that our polygon is simple,
    so $[p_i,p_{i+1}] \cap [p_j,p_{j+1}] = \emptyset$.
  \end{enumerate}

  The left hand side of \eqref{eq:Hopf}
  is the umlaufzahl. 
  We replace each summand using (1) and then add all sums from (2).
  Most of the summands cancel pairwise,
  what remains is the right hand side of \eqref{eq:Hopf}.
  This equals $1$, as explained above.
\end{proof}

\setcounter{section}{22}
\section{A “water proof” plausibility argument}

In every-day mathematical communication we often only sketch
the idea and appeal to some degree of informal intuition.
This is especially true for geometric statements,
like Pick's theorem, where we often heavily rely on our visual perception.
This is usually a good thing for human learning and understanding,
but may also inhibit the desire for a sound formalization.

We sketch such a visual proof, appealing to geometric intuition
as a counterpart to our algebraic formalization.
We then explain how the intuition can be made precise
and finally leads to our algebraic viewpoint,
closing this circle of ideas.

\begin{figure}[ht]
  \begin{tikzpicture}[x=5mm, y=5mm, rounded corners=0.1pt]
    \begin{scope}[shift={(0,0)}]
      \draw[color=black!50, step=1] (-1,-1) grid (6,7);
      \draw[line width=1pt, black] 
      (0,2) -- (2,0) -- (5,0) -- (5,2) -- (4,3) -- (5,5) --
      (2,6) -- (0,3) -- (2,5) -- (4,1) -- (3,1) -- (2,3) -- cycle;
      \begin{scope}
        \clip (-1,-1) rectangle (6,7)
        (0,2) -- (2,0) -- (5,0) -- (5,2) -- (4,3) -- (5,5) --
        (2,6) -- (0,3) -- (2,5) -- (4,1) -- (3,1) -- (2,3) -- cycle;
        \foreach \x in {-1,0,...,6}{
          \foreach \y in {-1,0,...,7}{
            \draw[draw=none, fill=black!30, fill opacity=0.4] (\x,\y) circle[radius=1mm];
          }
        }
      \end{scope}
      \begin{scope}
        \clip (0,2) -- (2,0) -- (5,0) -- (5,2) -- (4,3) -- (5,5) --
        (2,6) -- (0,3) -- (2,5) -- (4,1) -- (3,1) -- (2,3) -- cycle;
        \foreach \x in {-1,0,...,6}{
          \foreach \y in {-1,0,...,7}{
            \draw[draw=none, fill=blue!60, fill opacity=0.4] (\x,\y) circle[radius=1mm];
          }
        }
      \end{scope}
    \end{scope}
    \begin{scope}[shift={(9,0)}]
      \draw[color=black!50, step=1] (-1,-1) grid (6,7);
      \draw[line width=1pt, black] 
      (0,2) -- (2,0) -- (5,0) -- (5,2) -- (4,3) -- (5,5) --
      (2,6) -- (0,3) -- (2,5) -- (4,1) -- (3,1) -- (2,3) -- cycle;
      \begin{scope}
        \clip (-1,-1) rectangle (6,7)
        (0,2) -- (2,0) -- (5,0) -- (5,2) -- (4,3) -- (5,5) --
        (2,6) -- (0,3) -- (2,5) -- (4,1) -- (3,1) -- (2,3) -- cycle;
        \foreach \x in {-1,0,...,6}{
          \foreach \y in {-1,0,...,7}{
            \draw[draw=none, fill=black!30, fill opacity=0.2] (\x,\y) circle[radius=2mm];
          }
        }
      \end{scope}
      \begin{scope}
        \clip (0,2) -- (2,0) -- (5,0) -- (5,2) -- (4,3) -- (5,5) --
        (2,6) -- (0,3) -- (2,5) -- (4,1) -- (3,1) -- (2,3) -- cycle;
        \foreach \x in {-1,0,...,6}{
          \foreach \y in {-1,0,...,7}{
            \draw[draw=none, fill=blue!60, fill opacity=0.2] (\x,\y) circle[radius=2mm];
          }
        }
      \end{scope}
    \end{scope}
    \begin{scope}[shift={(18,0)}]
      \draw[color=black!50, step=1] (-1,-1) grid (6,7);
      \draw[line width=1pt, black] 
      (0,2) -- (2,0) -- (5,0) -- (5,2) -- (4,3) -- (5,5) --
      (2,6) -- (0,3) -- (2,5) -- (4,1) -- (3,1) -- (2,3) -- cycle;
      \begin{scope}
        \clip (-1,-1) rectangle (6,7)
        (0,2) -- (2,0) -- (5,0) -- (5,2) -- (4,3) -- (5,5) --
        (2,6) -- (0,3) -- (2,5) -- (4,1) -- (3,1) -- (2,3) -- cycle;
        \foreach \x in {-1,0,...,6}{
          \foreach \y in {-1,0,...,7}{
            \draw[draw=none, fill=black!30, fill opacity=0.1] (\x,\y) circle[radius=4mm];
          }
        }
      \end{scope}
      \begin{scope}
        \clip (0,2) -- (2,0) -- (5,0) -- (5,2) -- (4,3) -- (5,5) --
        (2,6) -- (0,3) -- (2,5) -- (4,1) -- (3,1) -- (2,3) -- cycle;
        \foreach \x in {-1,0,...,6}{
          \foreach \y in {-1,0,...,7}{
            \draw[draw=none, fill=blue!80, fill opacity=0.1] (\x,\y) circle[radius=4mm];
          }
        }
      \end{scope}
    \end{scope}
  \end{tikzpicture}
  \caption{\textgreek{Πάντα ῥεῖ}, everything flows.  We let the water do the work.}
  \label{fig:WaterFlow}
\end{figure}
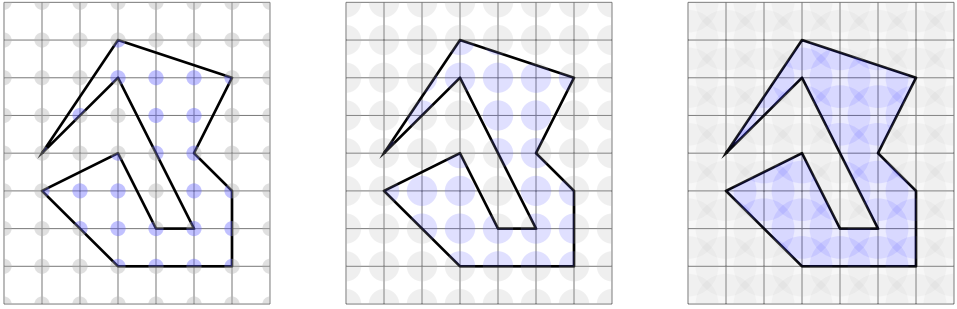

\begin{proof}[The water proof]
  At each integer point $q \in \Z^2$ we place a unit drop of water.
  The water then flows evenly in all directions.  Finally, the plane is
  uniformly covered with water, one unit of water per unit square.
  (To ensure finiteness, we can think of periodic boundary conditions.)

  Now consider two integer points $u,v \in \Z^2$.
  Half rotation about its center $c = (u+v)/2$
  defines the involution $\rho \colon z \mapsto u + v - z$.
  This reverses the edge and thus the flow.
  Since $\rho$ maps the lattice $\Z^2$ to itself,
  the water that flows from the point $q \in \Z^2$ over $[u,v]$ 
  is compensated by the water flowing from the point $\rho(q) \in \Z^2$ over $[u,v]$.
  The net flow over the edge $[u,v]$ is zero.
  
  This shows that the total amount of water within $B$ never changes.
  At the start it is the share of drops that fall into $B$.
  At the end it equals the surface area of $B$.
  We thus obtain:
  \begin{align*}
    \vol_2(B) = I + \sum_{q \in \Z^2 \cap C} \beta(q) / 2\pi
  \end{align*}

  Here $\beta(q)$ is the interior angle at the boundary point $q \in C$.
  So our physical intuition, or gedankenexperiment, leads us to
  Pick's lemma \ref{lem:PickEuclidean} using the euclidean angle measure.

  We once again invoke Hopf's umlaufsatz:
  the sum of turning angles is $\sum_{q \in C} [ \pi - \beta(q) ] = 2\pi$,
  so we arrive at Pick's theorem $\vol_2(B) = I + J/2 - 1$.
\end{proof}



\begin{remark}
  This “water proof” is a wonderous example
  of a physical plausibility argument.
  To many people is seems convincing
  because it appeals to our physical experience
  and our tried-and-tested understanding of the world.
  But is it a proof, really?
  Can we actually understand the physical world?
  It is easy to visualize, intuitively, but hard to formalize, rigorously.
\end{remark}

We can give the physical argument more mathematical substance.
First, we want to ensure finiteness of our experimentation table,
so instead of the entire plane $\R^2$ we consider a square $[-r,+r]^2$
with sufficiently large $r \in \N$ and periodic boundary conditions.
Geometrically, this is a flat torus $T = \R^2 / 2r\Z^2$,
and we keep all necessary symmetries: translations and reflections.

Second, we specify the properties of our idealized water drops.
For each time $t \in \ei{0}{1}$ let $\mu_t$ be a continuous probability measure,
ending with the uniform (Haar) measure $\mu_1$ on $T$,
so that $4 r^2 \mu_1(B)$ is the area of our inner region. 
Now we observe the quantity 
\begin{align*}
  f(t) := \sum_{q \in \Z^2/2r\Z^2} \mu_t(B-q) .
\end{align*}

We have $f(1) = \vol_2(B)$.
For each $t > 0$ we assume $\mu_t$ to be symmetric
with respect to $z \mapsto -z$ and uniform on its support.
Assuming that $\mu_t$ varies continuously with $t$,
the above symmetry argument suggests that $f$ is constant.
At the other end we look at the limit for $t \searrow 0$:

\begin{example}
  As in \autoref{fig:WaterFlow}, for small $t$ 
  we let $\supp \mu_t = D(0,t)$ be the disc of radius $t$ around the origin $0$.
  For sufficiently small $t$ we obtain $f(t) = \sum_{q \in \Z^2} \Ang(P-q)$
  as in \ref{lem:PickEuclidean}.

\end{example}

\begin{example}
  Alternatively, for small $t$ 
  we consider $\supp \mu_t = [-t^2,t^2] \times [-t,t]$
  to be a thin vertical rectangle.
  In the limit $t \searrow 0$ we obtain
  $f(t) \to \sum_{q \in \Z^2} \Dang(P-q)$
  as in \ref{lem:PickDiscrete}.
  
\end{example}

In this fashion our physical intuition can lead us to both flavours of Pick's lemma.

\begin{remark}
  This tale can be recounted by replacing water with heat.
  This is appealing for a physically inclined audience
  familiar with the heat equation. 
  Moreover, this provides an explicit model of the flow
  as partial differential equations, this time for $t \in \ee{0}{\infty}$.
  In this model 
  we can then prove that $f$ is indeed constant
  and calculate the limit for $t \searrow 0$ and for $t \nearrow \infty$.
\end{remark}


\setcounter{section}{16}
\section{Quotes}

Pick's theorem has been formally implemented, with considerable effort,
first in 2011 by John Harrison in \texttt{HOL\,Light} \cite{Harrison:2011},
then in 2024 by Sage Binder and Katherine Kosaian in \texttt{Isabelle}
\cite{Binder:Kosaian:2024}.

\begin{quote}
  The proof turned out to be somewhat harder work than expected. [\dots]
  Charming and surprising as Pick's theorem is, one would normally consider it
  a very straight-forward result [\dots].
  What accounts for the difficulty of formalizing Pick?
  Of course, it could be just a reflection of our own ineptitude.
  However we prefer to believe that there is an intrinsic difficulty
  in formalizing many geometric proofs in stating and proving
  in a formal way various properties that seem intuitively obvious ‘by eye’.
  (John Harrison \cite{Harrison:2011})
\end{quote}

We close with a heartwarming anecdote concerning practical applicability: 

\begin{quote}
  Some years ago, the Northwest Mathematics Conference was held in Eugene,
  Oregon. To add a bit of local flavor, a forester was included on the program,
  and those who attended his session were introduced to a variety of nice
  examples which illustrated the important role that mathematics plays in the
  forest industry. One of his problems was concerned with the calculation of
  the area inside a polygonal region drawn to scale from field data obtained for
  a stand of timber by a timber cruiser. The standard method is to overlay a
  scale drawing with a transparency on which a square dot pattern is printed.
  Except for a factor dependent on the relative sizes of the drawing and the
  square grid, the area inside the polygon is computed by counting all of the
  dots fully inside the polygon, and then adding half of the number of dots
  which fall on the bounding edges of the polygon. Although the speaker was
  not aware that he was essentially using Pick's formula, I was delighted to see
  that one of my favorite mathematical results was not only beautiful, but even
  useful. (Duane DeTemple, cited in \cite{Gruenbaum:Shepard:1993})
\end{quote}

\section*{Remerciements}

Elli zu Neblberg expresses her heartfelt gratitude to all of her collaborators.


\bibliographystyle{amsplain}
\bibliography{Pick-in-Lean}

\end{document}